\newtheorem{thm}{Theorem}[section]
\newtheorem{proposition}{Proposition}[section]
\newtheorem{example}{\bf Example}[section]
\newtheorem{remark}{\bf Remark}[section]
\newtheorem{lemma}{\bf Lemma}[section]
\newtheorem{definition}{Definition}[section]
\numberwithin{equation}{section}
\newtheorem{assumption}{Assumption}[section]
\begin{document}
	
	\baselineskip=17pt
	
	\title[]
	{Nonzero-sum risk-sensitive continuous-time stochastic games with ergodic costs.}

	\author[M. K. Ghosh]{Mrinal K. Ghosh}
	\address{ Department of Mathematics\\
	 Indian Institute of Science\\
	  Bangalore-560012, India.}
	  \email{mkg@iisc.ac.in}
	
	\author[S. Golui]{Subrata Golui}
	\address{Department of Mathematics\\
		Indian Institute of Technology Guwahati\\
		Guwahati, Assam, India}
	\email{golui@iitg.ac.in}
	
	\author[C. Pal]{Chandan Pal}
	\address{Department of Mathematics\\
		Indian Institute of Technology Guwahati\\
		Guwahati, Assam, India}
	\email{cpal@iitg.ac.in}
	
	\author[S. Pradhan]{Somnath Pradhan}
	\address{Department of Mathematics\\
	 Indian Institute of Science Education and Research\\
	  Pune, Maharashtra-411008, India}
	  \email{somnath@iiserpune.ac.in}

	
	\date{}
	
	\begin{abstract}
		\vspace{2mm}
		\noindent We study nonzero-sum stochastic games for continuous time Markov decision processes on a denumerable state space with risk-sensitive ergodic cost criterion. Transition rates and cost rates are allowed to be unbounded. Under a Lyapunov type stability assumption, we show that the corresponding system of coupled HJB equations admits a solution which leads to the existence of a Nash equilibrium in stationary strategies. We establish this using an approach involving principal eigenvalues associated with the HJB equations. Furthermore, exploiting appropriate stochastic representation of principal eigenfunctions, we completely characterize Nash equilibria in the space of stationary Markov strategies.
		\vspace{2mm}
		
		\noindent
		{\bf Keywords:}
		Nonzero-sum game, risk-sensitive ergodic cost criterion,  stationary strategies, coupled HJB equations, Fan's fixed point theorem, Nash equilibrium.
		
	\end{abstract}
	
	\maketitle

\section{INTRODUCTION}
 We consider a nonzero-sum stochastic game on the infinite time horizon for continuous time Markov decision processes (CTMDPs) on a denumerable state space. The performance evaluation criterion is exponential of integral cost which addresses the decision makers (i.e., players) attitude towards risk. In other words we address the problem of nonzero-sum risk sensitive stochastic games involving continuous time Markov decision processes. In the literature of stochastic games involving continuous time Markov decision processes, one usually studies the integral of the cost \cite{GH1}, \cite{GH2}, \cite{GH3}  which is the so called risk-neutral situation. In the exponential of integral cost, the evaluation criterion is multiplicative as opposed to the additive nature of evaluation criterion in the integral of cost case. This difference makes the risk sensitive case significantly different from its risk neutral counterpart. The study of risk sensitive criterion was first introduced in \cite{B2}; see \cite{W3} and the references therein. This criterion  is studied extensively in the context of MDP both in  discrete and continuous times; see, for instance \cite{CF}, \cite{MS1}, \cite{MS3},  \cite{FH1}, \cite{GL}, \cite{GLZ},  \cite{PP}, \cite{Z1}, and the references therein. The corresponding results for stochastic (dynamic) games are limited. Notable exceptions are  \cite{BG1}, \cite{BG2}, \cite{GKP}. In discrete time and discrete state space the risk-sensitive zero-sum stochastic games with  bounded cost and transition rates have been studied by Basu and Ghosh \cite{BG2} and nonzero-sum games in \cite{BG1}. For CTDMPs, zero-sum stochastic games with risk-sensitive costs for bounded cost and bounded transition rates have been studied in \cite{GKP}. One can see \cite{GP1}, \cite{WC2}, and the references therein for finite horizon risk-sensitive nonzero-sum games for CTMDPs. Recently risk sensitive continuous time Markov decision processes have been studied in \cite{BP}, \cite{GS}, \cite{KP1}, \cite{KP2}. In this present paper we extend the results of the above four papers to  nonzero-sum stochastic games. Using principal eigenvalue approach, under a Lyapunov type stability assumption, we have  shown that the corresponding system of coupled HJB equations admits a solution which in turn leads to the existence of Nash equilibrium in stationary strategies. Also, exploiting the stochastic representation of principal eigenfunction we  completely characterize all possible Nash equilibria in the space of stationary Markov strategies.
 The main motivation for studying this kind of games arises from their applications to many interesting problems, such as  controlled birth-and-death systems, telecommunication and queueing systems in which the transition and cost rates may be both unbounded.

Our main contribution in this paper is the following. We establish the existence and characterization of Nash equilibria under a blanket Lyapunov type stability assumption. To be more specific, we study ergodic nonzero sum risk-sensitive stochastic games for CTMDPs having the following features: (a) the transition and the cost rates may be unbounded (b) state space is countable (c) at any state of the system the space of admissible actions is compact (d) the strategies may be history dependent. To our knowledge, these results are new in the  literature of ergodic non-zero sum risk-sensitive games for CTMDPs. Similar risk-sensitive game problems for discrete time Markov decision processes have been studied under small costs and geometric ergodicity assumption in \cite{BG2}.

The rest of this paper is organized as follows: Section 2 deals with the problem description and preliminaries. The ergodic cost criterion is analyzed in Section 3. Under a Lyapunov type stability assumption(s), we first establish the existence of a solution to the corresponding coupled Hamilton-Jacobi-Bellman (HJB) equations. This in turn leads to the existence of a Nash equilibrium in stationary strategies (see Theorem \ref{theo 4.1}). In Section 4, we present an illustrative example.

\section{The game model}
For the sake of notational simplicity we treat two player game. The $N$-player game for $N\geq 3$, is analogous. The continuous-time two-person nonzero-sum stochastic game model which consists of the following elements
\begin{equation}
\{S, U_1,U_2, (U_1(i)\subset U_1, U_2(i)\subset U_2,i\in S),\bar{\pi}_{ij}( u_1,u_2),\bar{c}_1(i, u_1,u_2), \bar{c}_2(i, u_1,u_2)\}, \label{eq 2.1}
\end{equation}
where each component is described below:
\begin{itemize}
\item  $S$, called the state space, is assumed to be the set of all positive integers endowed with the discrete topology, i.e. $S=:\{1,2,\cdots\}$. 	
\item   $U_1$ and $U_2$ are the action sets for players 1 and 2, respectively. The action spaces $U_1$ and $U_2$ are assumed to be Borel spaces with the Borel $\sigma$-algebras $\mathcal{B}(U_1)$ and $\mathcal{B}(U_2)$, respectively.
\item  For each $i\in S$, $U_1(i)\in \mathcal{B}(U_1)$ and $U_2(i)\in \mathcal{B}(U_2)$ denote the sets of admissible actions for players 1 and 2 in state $i$, respectively.  Let $K:=\{(i, u_1,u_2)|i\in S, u_1\in U_1(i), u_2\in U_2(i)\}$, which is a Borel subset of $S\times U_1\times U_2$.\\
Throughout this paper, we assume that\\
\noindent {\bf (A1)(a)} For each $i\in S$, the admissible action spaces $U_k(i), k=1,2$, are nonempty and compact subsets of $U_k$.
	\item The transition rates $\bar{\pi}_{ij}(u_1,u_2) , (u_1,u_2)\in U_1(i)\times U_2(i), i,j \in S$, satisfy the condition  $\bar{\pi}_{ij}(u_1,u_2) \geq 0$  for all $i \neq j, \ (u_1,u_2)\in U_1(i)\times U_2(i)$. Also, we assume that: \\
\noindent {\bf (A1)(b)}
 The transition rates  $\bar{\pi}_{ij}(u_1,u_2)$ are conservative, i.e.,  $$ \sum_{j \in S}\bar{\pi}_{ij}(u_1,u_2)=0~ \mbox{for} ~i \in S ~\mbox{and}~ (u_1,u_2)\in U_1(i)\times U_2(i) \, .$$
 and
\[
\bar{\pi}_{i}:=\sup_{ (u_1,u_2)\in U_1(i)\times U_2(i)} [-\bar{\pi}_{ii}(u_1,u_2)]<\infty  \, .
\]
	\item Finally, the measurable function $\bar{c}_k:K \to \mathbb{R}_{+}$ denotes the cost rate function for player $k, \; k=1,2$.
\end{itemize}

We consider a  continuous time Markov decision processes (CTMDPs)  $\{Y(t)\}_{t\geq 0}$ with state space $S$ and controlled rate matrix $\Pi_{u_1,u_2}=(\bar{\pi}_{ij}(u_1,u_2))$.
 To construct the underlying CTMDPs $Y(t)$ (as in [\cite{GP}, \cite{K1}, \cite{PZ}) we introduce some notations: let $S_\Delta:=S \cup \{\Delta\}$ (with some $\Delta \notin S$), $\Omega_0:=(S\times(0,\infty))^\infty$, $\Omega_m:=(S\times (0,\infty))^m\times S\times (\{\infty\}\times\{\Delta\})^\infty$ for $m\geq 1$ and $\Omega:=\cup_{m=0}^\infty\Omega_m$. Let $\mathscr{F}$ be the Borel $\sigma$-algebra on $\Omega$. Then we obtain the measurable space $(\Omega, \mathscr{F})$.
	For some $m\geq 1$, and sample $ \omega:=(i_0, \theta_1, i_1, \cdots , \theta_m, i_m, \cdots)\in \Omega,$ define
	 \begin{align*}
	 T_0(\omega):=0,~ T_n(\omega):= T_{n-1}(\omega)+\theta_{n},~ T_\infty(\omega):=\lim_{n\rightarrow\infty}T_n(\omega).
	\end{align*}
 Using $\{T_m\}$, we define the state process $\{Y(t)\}_{t\geq 0}$ as
	\begin{equation}
Y(t):=\sum_{m\geq 0}I_{\{T_m\leq t<T_{m+1}\}}i_m+ I_{\{t\geq T_\infty\}}\Delta, \text{ for } t\geq 0~(\text{with}~ T_0:=0).\label{eq 2.4}
	\end{equation}
	Here, $I_{E}$ denotes the indicator function of a set $E$, and we use the convention that $0+z=:z$ and $0z=:0$ for all $z\in S_\Delta$. Obviously, $Y(t)$ is right-continuous on $[0,\infty)$. From (\ref{eq 2.4}), we see that $T_m(\omega)$ $(m\geq 1)$ denotes the $m$-th jump moment of $\{Y(t)\}_{t\geq 0}$ and $i_{m-1}$ is the state of the process on $[T_{m-1}(\omega),T_m(\omega))$, $\theta_m(\omega)=T_m(\omega)-T_{m-1}(\omega)$ plays the role of sojourn time at state $i_{m-1}$, and the sample path $\{Y(t)(\omega)\}_{t\geq 0}$ has at most denumerable states $i_m(m=0,1,\cdots)$.  The process after $T_\infty$ is regarded to be absorbed in the state $\Delta$. Thus, let $q(\cdot | \Delta, u_1^\Delta,u_2^\Delta):\equiv 0$, $U_1^\Delta:=U_1\cup \{u_1^\Delta\}$, $U_2^\Delta:=U_2\cup \{u_2^\Delta\}$, $ U_1(\Delta):=\{u_1^\Delta\}$, $U_2(\Delta):=\{u_2^\Delta\}$. Also, assume that $\bar{c}_k(\Delta, u_1,u_2):\equiv 0$ ($\bar{c}_k$ is the running cost function for kth player) for all $(u_1,u_2)\in U_1^\Delta\times U_2^\Delta$, where $u_1^\Delta$, $u_2^\Delta$ are isolated points. Moreover, let $\mathscr{F}_t:=\sigma(\{T_m\leq s,Y({T_m})\in S\}:0\leq s\leq t, m\geq0)$ for all $t\geq 0$, $\mathscr{F}_{s-}=:\bigvee_{ t<s}\mathscr{F}_t$, and $\tilde{\mathscr{F}}:=\sigma(A\times \{0\},B\times (s,\infty):A\in \mathscr{F}_0, B\in \mathscr{F}_{s-})$ which denotes the $\sigma$-algebra of predictable sets on $\Omega\times [0,\infty)$ related to $\{\mathscr{F}_t\}_{t\geq 0}$.\\
To complete the specification of a risk-sensitive stochastic game  problem, we need, of course, to introduce an optimality criterion. This requires to define the class of strategies as below.
\begin{definition}
	A admissible strategy for player 1, denoted by $v_1=\{v_1(t)\}_{t\geq 0}$, is a transition probability $v_1(d u_1| \omega, t)$ from $(\Omega\times[0,\infty),\tilde{\mathscr{F}})$ onto $(U_1^\Delta,\mathcal{B}(U_1^\Delta))$, such that $v_1(U_1(Y(t-)(\omega))| \omega, t) = 1$. The set of all admissible strategies for player 1 is denoted by $\mathcal{A}_1$.
	A strategy $v_1 \in \mathcal{A}_1$, is called a Markov for player 1 if $v_1(t)( \omega)=v_1(t, Y(t-)(w))$, i.e., $v_1(d u_1|\omega, t) = v_1(d u_1| Y(t-)(w), t)$ for every $w\in \Omega$ and $t\geq 0$, where $Y({t-})(w):=\lim_{s\uparrow t} Y(s)(w)$. We denote by  $\mathcal{M}_1$ the family of all Markov strategies for player 1.
	If the Markov strategy $v_1$  for player 1 does not have any
explicit time dependency then it is called a stationary Markov strategy. The set of such strategies for player 1 is denoted by $\mathcal{S}_1$.  The sets of all admissible strategies $\mathcal{A}_2$, all Markov strategies $\mathcal{M}_2$ and all stationary strategies $\mathcal{S}_2$ for player 2 are defined similarly.
\end{definition}
To avoid the explosion of the state process $\{Y(t)\}_{t\geq 0}$, we need the following assumption imposed on the transition rates, which had been widely used in CTMDPs; see, for instance, [\cite{GL}, \cite{GLZ}, \cite{GP}, \cite{GS1}] and references therein.
\begin{assumption}\label{assm 3.1}
	There exists a Lyapunov function $\tilde{W} : S \to [1,\infty)$ such that
	\begin{enumerate}
		\item [(i)] $\sum_{j\in S}\tilde{W}(j)\overline{\pi}_{ij}(u_1,u_2)\leq C_1 \tilde{W}(i)+C_2$ for all $( u_1,u_2)\in U_1(i)\times U_2(i)$ and $i\in S$ with some constants $C_1\neq 0$, $C_2\geq 0$;
		
		\item [(ii)] $\bar{\pi}_{i}\leq C_3 \tilde{W}(i)$ for all $i\in S$ with some positive constant $C_3$.
	\end{enumerate}
\end{assumption}
For the rest of this article we are going to assume that Assumption \ref{assm 3.1} holds. Note that if $\sup_{i \in S}\bar{\pi}_{i}<\infty$ then Assumption \ref{assm 3.1} holds. In this case we can choose $\tilde{W}$ to be a suitable constant. Also note that under Assumption \ref{assm 3.1},  for any initial state $i\in S$ and any pair of strategies $(v_1,v_2)\in \mathcal{A}_1\times \mathcal{A}_2$, Theorem 4.27 in \cite{KR} yields the existence of a unique probability measure denoted by $P^{v_1,v_2}_i$ on $(\Omega,\mathscr{F})$.  Let $E^{v_1,v_2}_i$ be the expectation operator with respect to  $P^{v_1,v_2}_i$. Also, from [\cite{GH4}, pp.13-15], we know that $\{Y(t)\}_{t\geq 0}$ is a Markov process under any $(v_1,v_2)\in \mathcal{M}_1\times \mathcal{M}_2$ (in fact, strong Markov).

For any compact metric space $A$, let $\mathcal{P}(A)$ denote the space of probability measures on $A$ with Prohorov topology. Let $V_k = \mathcal{P}(U_k)$ and $V_k(i)=\mathcal{P}(U_k(i)) $ for $i\in S$ and $k=1,2$. For each $i ,j\in  S$, $k=1,2,$ $v_1\in V_1(i)$ and $v_2 \in V_2(i)$, the associated transition and cost rates are defined, respectively, as follows:
$$\pi_{ij}(v_1,v_2):=\int_{U_1(i)}\int_{U_2(i)}\bar{\pi}_{ij}(u_1,u_2)v_1(du_1)v_2(du_2),$$
$$c_{k}(v_1,v_2):=\int_{U_1(i)}\int_{U_2(i)}\bar{c}_{k}(u_1,u_2)v_1(du_1)v_2(du_2).$$

Note that for $k= 1, 2, v_k \in {\mathcal{S}_k}$ can be identified with a map $v_k: S \to V_k$ such that for each $j \in S, v_k(j) \in V_k(j)$
for each $j \in S$.
The sets ${\mathcal{S}_1}$ and ${\mathcal{S}_2}$ are endowed with product topology.

We list the commonly used notations below.
\begin{itemize}
	\item For any finite set $\mathcal{D}\subset S$, we define $\mathcal{B}_{\mathcal{D}} = \{f:S\to\mathbb{R}\mid f\,\,\,\text{is borel measurable function and}\,\,\, f(i) = 0\,\,\, \forall \,\, i\in \mathcal{D}^c\}$\,.
	\item  Given any real-valued function $\mathcal{V}\geq 1$ on $S$, we define a Banach space $(L^\infty_{\mathcal{V}},\|\cdot\|^\infty_\mathcal{V})$ of $\mathcal{V}$-weighted  functions by
	$$L^\infty_\mathcal{V}=\biggl\{u: S\rightarrow\mathbb{R}\mid \|u\|^\infty_\mathcal{V}:=\sup_{i\in  S}\frac{|u(i)|}{\mathcal{V}(i)}< \infty\biggr\}.$$
	\item $L^{1,\infty}_\mathcal{V}$ denotes the subset of $L^\infty_\mathcal{V}$ consists of function $u$ such that $ \|u\|^\infty_\mathcal{V}\leq 1$.
\end{itemize}

For $k=1,2$, let $\bar{c}_k: S \times U_1 \times U_2 \rightarrow [0, \ \infty)$ be the running cost function for the $k$th player, i.e., when state of the system is $i$ and the actions $(u_1,u_2)$ are chosen by the players, then  the cost incurred by the $k$th player is $\bar{c}_k(i,u_1,u_2)$. By choosing appropriate strategies, each player wants to minimize his/her accumulated cost over infinite time horizon.

For a pair of admissible strategies  $(v_1,v_2)$, the risk-sensitive ergodic cost for player $k$ is given by
\begin{equation}\label{main1}
\rho^{v_1,v_2}_k (i)\ := \   \limsup_{ T \to \infty} \frac{1}{T} \ln E_{i}^{v_1,v_2} \Big[ e^{\int^T_0  c_k(Y(t),v_1 (t),v_2 (t)) dt}
 \Big] \, ,
\end{equation}
where $Y(t)$ is the CTMDP corresponding to $(v_1,v_2) \in  \mathcal{A}_1 \times \mathcal{A}_2 $ and  $E_{i}^{v_1,v_2} $ denotes the expectation with respect to the law of the process $Y(t)$ with initial condition $Y(0)=i$.

Since we are allowing our transition and cost rates to be unbounded, to guarantee the finiteness of $\rho^{v_1,v_2}_k$ for $k=1,2$, we need the following Assumption.
\begin{assumption}\label{assm 3.2}
We assume that the CTMDP $\{Y(t)\}_{t\geq 0}$ is irreducible under every  pair of stationary Markov strategies $(v_1,v_2)\in \mathcal{S}_1 \times \mathcal{S}_2$.
Furthermore, suppose there exist a constant $C_4>0$ and a Lyapunov function $W : S \to [1,\infty)$ such that one of the following hold.
\begin{itemize}
\item[(a)]\textbf{When the running cost is bounded:} For some positive constant $\gamma > \max\{\|c_1\|_{\infty},\|c_2\|_{\infty}\}$  and a finite set $\mathscr{K}$ it holds that
$$\sup_{(u_1,u_2)\in U_1(i)\times U_2(i)}\sum_{j\in S}W(j)\overline{\pi}_{ij}(u_1,u_2)\leq C_4 I_{\mathscr{K}}(i)-\gamma W(i) ~\forall i\in S.$$
\item[(b)]\textbf{When the running cost is unbounded:} For some norm-like function $\ell  :S\rightarrow\mathbb{R}_{+}$ and a finite set $\mathscr{K}$ it holds that
$$\sup_{(u_1,u_2)\in U_1(i)\times U_2(i)}\sum_{j\in S}W(j)\overline{\pi}_{ij}(u_1,u_2)\leq C_4 I_{\mathscr{K}}(i)-\ell(i)W(i) ~\forall i\in S.$$
Also, the functions $\ell(\cdot)-\max_{(u_1,u_2)\in U_1(\cdot)\times U_2(\cdot)}c_k(\cdot,u_1,u_2), \; k=1,2$, are norm-like.
\end{itemize}
\end{assumption}
\begin{definition}
A pair of strategies $(v_1^{*},v_2^{*}) \in \mathcal{A}_1 \times \mathcal{A}_2$ is called a Nash equilibrium if
\begin{equation*}
\rho_{1}^{v_1^*,v^*_2} (i)\ \leq \  \rho_{1}^{v_1,v^*_2} (i)\;\mbox{for all} \; v_1\in \mathcal{A}_1\; \mbox{ and} \; i \in S
\end{equation*}
and
\begin{equation*}
\rho_{2}^{v_1^*,v^*_2} (i)\ \leq \  \rho_{2}^{v^*_1,v_2} (i)\;\mbox{for all} \; v_2\in \mathcal{A}_2\; \mbox{ and} \; i \in S.
\end{equation*}
\end{definition}
\noindent We wish to establish the  existence of a Nash equilibrium in stationary strategies.
To ensure the existence of a Nash equilibrium, we assume the following:
\begin{assumption}\label{assm 3.3}
	\begin{enumerate}
		\item [(i)] For any fixed $i,  j\in S,\, $k=1,2$\,,  ~ \overline{\pi}_{ij}(u_1,u_2)$ and $\bar{c}_k(i, u_1,u_2)$ are continuous in $(u_1,u_2)\in U_1(i)\times U_2(i)$\,.
		
		\item [(ii)]$\displaystyle \sum_{j\in S}W(j)\overline{\pi}_{ij}(u_1,u_2)$ is continuous in $(u_1,u_2)\in U_1(i)\times U_2(i)$ for any given $i\in S$, where $W$ is as Assumption \ref{assm 3.2}.
		\item [(iii)] There exists $i_0\in S$ such that $\overline{\pi}_{i_0j}(u_1,u_2)>0$ for all $j\neq i_0$ and $(u_1,u_2)\in U_1(j)\times U_2(j)$.
	\end{enumerate}	
\end{assumption}

We now proceed to establish the existence of a Nash equilibrium in stationary strategies.
To this end we first outline a procedure for establishing the existence of a Nash equilibrium. Suppose player 2 announces that he is going to employ a strategy $v_2 \in \mathcal{S}_2$. In such a scenario, player 1 attempts to minimize
\begin{equation*}
\rho^{v_1,v_2}_1 (i)\ = \   \limsup_{ T \to \infty} \frac{1}{ T} \ln E_{i}^{v_1,v_2} \Big[ e^{ \int^T_0  c_1(Y(t),v_1 (t),v_2 (Y(t-))) dt}
 \Big] \, ,
\end{equation*}
over $v_1 \in \mathcal{A}_1$. Thus for player 1 it is a continuous time Markov decision problem (CTMDP) with risk sensitive ergodic cost. This problem has been studied in  \cite{BP}, \cite{GS}, \cite{KP1}, \cite{KP2}. In particular under certain assumptions, it is shown in \cite{BP}, \cite{KP1}, \cite{KP2},  that the following Hamilton-Jacobi-Bellman (HJB) equation
 \begin{equation*}
 \left\{\begin{aligned}
  \rho_{1} ~\hat{\psi}_{1}(i) &= \inf_{v_1\in V_1(i)} \Big [ \sum_{j\in S}{\pi}_{ij}({v_1,v_{2}(i)})
\hat\psi_{1}(j) + c_1(i,v_1,v_{2}(i))\hat\psi_{1}(i) \Big ]   \\
 \hat\psi_{1}(i_0)  &= 1,
 \end{aligned}
 \right.
\end{equation*}
has a suitable solution $(\rho_{1} ,\hat{\psi}_{1})$, where $\rho_{1} $ is a scalar and $\hat{\psi}_{1}:S \to \mathbb{R}$ has suitable growth rate; $i_0$ is a fixed element of $S$.  Furthermore it is shown in \cite{BP}, \cite{KP1}, \cite{KP2} that
\begin{equation*}
\rho_1 \ = \  \inf_{v_1 \in \mathcal{A}_1} \limsup_{ T \to \infty} \frac{1}{ T} \ln E_{i}^{v_1,v_2} \Big[ e^{ \int^T_0  c_1(Y(t),v_1 (t),v_2 (Y(t-))) dt}
 \Big] \, ,
\end{equation*}
and if $v_1^* \in \mathcal{S}_1$ is such that for $i \in S$
\begin{eqnarray*}
 && \inf_{v_1\in V_1(i)} \Big [ \sum_{j\in S}{\pi}_{ij}({v_1,v_{2}(i)})
\hat\psi_{1}(j) + c_1(i,v_1,v_{2}(i))\hat\psi_{1}(i) \Big ]  \nonumber \\
&=&  \sum_{j\in S}{\pi}_{ij}({v_{1}^*(i),v_{2}(i)})
\hat\psi_{1}(j) + c_1(i,v_{1}^*(i),v_{2}(i))\hat\psi_{1}(i) ,
\end{eqnarray*}
then $v_1^* \in \mathcal{S}_1$ is an optimal control for player 1, i.e., for any $i \in S $
\begin{equation*}
\rho_1 \ = \   \limsup_{ T \to \infty} \frac{1}{ T} \ln E_{i}^{v_1^*,v_2} \Big[ e^{\int^T_0  c_1(Y(t),v_1^* (Y(t-)),v_2 (Y(t-))) dt}
 \Big] \, .
\end{equation*}
 In other words, given that player 2 is using the strategy $v_2 \in \mathcal{S}_2$, $v_1^* \in \mathcal{S}_1$ is an optimal response for player 1. Clearly $v_1^*$ depends on $v_2$ and moreover there may be several optimal responses for player 1 in $\mathcal{S}_1$. Analogous results holds for player 2 if player 1 announces that he is going to use a strategy $v_1 \in \mathcal{S}_1$. Hence given a pair of strategies $(v_1,v_2) \in \mathcal{S}_1 \times \mathcal{S}_2$, we can find a set of pairs of optimal responses $\{(v_1^*,v_2^*) \in \mathcal{S}_1 \times \mathcal{S}_2\}$ via the appropriate pair of HJB equations described above. This defines a set-valued map. Clearly any fixed point of this set-valued map is a Nash equilibrium.

 The above discussion leads to the following procedure for finding a pair of Nash equilibrium strategies.
 Suppose that there exist a pair of stationary strategies  $(v_1^*,v_2^*) \in \mathcal{S}_1 \times \mathcal{S}_2$, a pair of scalars $(\rho_1^*, \rho_2^*)$ and a pair of functions $(\hat{\psi}_{1}^*,\hat{\psi}_{2}^*)$ with appropriate growth conditions, satisfying the following coupled HJB equations:
 \begin{equation*}
 \left\{\begin{aligned}
         \rho^{*}_{1} ~\hat{\psi}^{*}_{1}(i) &= \inf_{v_1\in V_1(i)} \Big [ \sum_{j\in S}{\pi}_{ij}({v_1,v_{2}^*(i)})
\hat\psi^{*}_{1}(j) + c_1(i,v_1,v_{2}^*(i))\hat\psi^{*}_{1}(i) \Big ]  \\
       &=  \sum_{j\in S}{\pi}_{ij}({v_{1}^*(i),v_{2}^*(i)})
\hat\psi^{*}_{1}(j) + c_1(i,v_{1}^*(i),v_{2}^*(i))\hat\psi^{*}_{1}(i)    \\
\displaystyle{ \hat\psi^{*}_{1}(i_0) } &= 1,  \\
 \rho^{*}_{2} ~\hat{\psi}^{*}_{2}(i) &= \inf_{v_2\in V_2(i)} \Big [ \sum_{j\in S}{\pi}_{ij}({v_{1}^*(i),v_2})\hat\psi^{*}_{2}(j) + c_2(i,v_{1}^*(i),v_{2})\hat\psi^{*}_{2}(i) \Big ] \\
&=  \sum_{j\in S}{\pi}_{ij}({v_{1}^*(i),v_{2}^*(i)})
\hat\psi^{*}_{2}(j) + c_2(i,v_{1}^*(i),v_{2}^*(i))\hat\psi^{*}_{2}(i)   \\
\displaystyle{ \hat\psi^{*}_{2}(i_0) } &= 1,
       \end{aligned}
 \right.
\end{equation*}
where as before $i_0 \in S$ is a fixed point. Then it can be shown that
$(v_1^*,v_2^*)$ is a pair of Nash equilibrium and  $(\rho_1^*, \rho_2^*)$ is the pair of corresponding Nash values. Thus the main result of our paper is to establish that the above coupled HJB equations has suitable solutions.
\begin{remark}
 Note that the similar stochastic optimal control problem has been studied in \cite{GS}, \cite{KP2}  for bounded cost and bounded transition rates. But in our game model transition and cost rates are unbounded. Analogous MDP problems are treated in \cite{BP}.
\end{remark}

\section{Coupled HJB Equations and Existence of Nash Equilibrium }
By the definition of weak convergence of probability measures, one can easily get the following result, which will be crucial for the existence of Nash equilibrium; we omit the details.
\begin{lemma}\label{lemm 3.1}
	Under Assumptions \ref{assm 3.1}, \ref{assm 3.2}, and \ref{assm 3.3}, the functions
	$$c_k(i,v_1,v_2), \; k=1,2 \; \; \mbox{and} \; \; \sum_{j\in S}\pi_{ij}(v_1,v_2)\phi(j)$$ are continuous  on $V_1(i)\times V_2(i)$ for each fixed $\phi \in L^\infty_{W}$ and $i\in S$.
\end{lemma}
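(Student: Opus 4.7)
The plan is to reduce both continuity claims to the following general principle: if $f: U_1(i)\times U_2(i)\to\mathbb{R}$ is bounded and continuous, then $(v_1,v_2)\mapsto \int f\,d(v_1\otimes v_2)$ is continuous on $V_1(i)\times V_2(i)$ in the product Prohorov topology. This is standard, because on the product of compact metric spaces $U_1(i)\times U_2(i)$, product-weak convergence of marginals is equivalent to weak convergence of the product measures. So everything reduces to proving that the relevant integrands are bounded and jointly continuous in $(u_1,u_2)$.

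For $c_k(i,v_1,v_2)$ this is immediate: by Assumption \ref{assm 3.3}(i), $\bar c_k(i,\cdot,\cdot)$ is continuous on the compact set $U_1(i)\times U_2(i)$, hence bounded, so the integral depends continuously on $(v_1,v_2)$.

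The substantive step is the second one: I want to show that
\[
f(u_1,u_2):=\sum_{j\in S}\bar\pi_{ij}(u_1,u_2)\,\phi(j)
\]
is bounded and jointly continuous on $U_1(i)\times U_2(i)$ for every $\phi\in L^\infty_W$. Boundedness follows by splitting $j=i$ vs.\ $j\neq i$: on the one hand $|\bar\pi_{ii}(u_1,u_2)|\leq \bar\pi_i<\infty$ by Assumption \ref{assm 3.1}(b); on the other, since $\bar\pi_{ij}\geq 0$ for $j\neq i$,
\[
\sum_{j\neq i}|\phi(j)|\bar\pi_{ij}(u_1,u_2)\leq \|\phi\|_W^\infty\sum_{j\neq i}W(j)\bar\pi_{ij}(u_1,u_2),
\]
and the right-hand side is finite by Assumption \ref{assm 3.2} and uniformly bounded on $U_1(i)\times U_2(i)$ by the continuity hypothesis in Assumption \ref{assm 3.3}(ii) together with compactness.

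The continuity of $f$ is where the work is; the key device is Scheff\'e's lemma on the counting measure. Let $(u_1^n,u_2^n)\to (u_1,u_2)$. Set $a_n(j):=W(j)\bar\pi_{ij}(u_1^n,u_2^n)$ and $a(j):=W(j)\bar\pi_{ij}(u_1,u_2)$ for $j\neq i$; these are nonnegative sequences in $\ell^1(S\setminus\{i\})$. By Assumption \ref{assm 3.3}(i) we have pointwise convergence $a_n(j)\to a(j)$, and by Assumption \ref{assm 3.3}(ii) together with the continuity of $\bar\pi_{ii}$ we have $\sum_{j\neq i}a_n(j)\to \sum_{j\neq i}a(j)$. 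Scheff\'e's lemma then gives $a_n\to a$ in $\ell^1$. Since $\phi/W$ is a bounded function on $S$ with $\|\phi/W\|_\infty\leq \|\phi\|_W^\infty$, it follows that
\[
\sum_{j\neq i}\phi(j)\bar\pi_{ij}(u_1^n,u_2^n)=\sum_{j\neq i}\frac{\phi(j)}{W(j)}a_n(j)\;\longrightarrow\;\sum_{j\neq i}\frac{\phi(j)}{W(j)}a(j),
\]
and the $j=i$ term converges by Assumption \ref{assm 3.3}(i). Hence $f(u_1^n,u_2^n)\to f(u_1,u_2)$, so $f$ is continuous. Applying the general principle stated in the first paragraph and an application of Fubini to interchange $\sum_j$ and $\int\cdot\,dv_1\otimes v_2$ (justified by boundedness of $|\phi|$ times summability in $j$) yields the stated continuity of $\sum_{j\in S}\pi_{ij}(v_1,v_2)\phi(j)$. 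The main obstacle I anticipate is precisely the Scheff\'e step, i.e.\ upgrading pointwise-plus-mass-convergence to $\ell^1$-convergence so that the bounded weight $\phi/W$ can be inserted; everything else is bookkeeping.
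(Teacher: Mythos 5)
Your argument is correct and is essentially the approach the paper has in mind when it omits the details: everything is reduced to integrating a bounded continuous function of $(u_1,u_2)$ against $v_1\otimes v_2$ and invoking weak convergence of product measures on the compact set $U_1(i)\times U_2(i)$, with the Scheff\'e step (equivalently, the generalized dominated convergence/Fatou lemma of Hern\'andez-Lerma and Lasserre that the paper uses in Lemmas \ref{lemm 3.4} and \ref{lemm 4.1}) supplying the needed passage to the limit in the $W$-weighted sum over $j\neq i$. The only blemish is a harmless citation slip: the bound $|\bar\pi_{ii}(u_1,u_2)|\leq\bar\pi_i<\infty$ comes from (A1)(b) (or from Assumption \ref{assm 3.1}(ii)), not from ``Assumption \ref{assm 3.1}(b)''.
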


Let $\mathcal{D}_n\subset S$ be an increasing sequence of finite sets such that $\cup_{n}\mathcal{D}_n = S$ and $i_0\in \mathcal{D}_n$ for each $n\geq 1$\,. In the next lemma we show the existence of eigenpairs to certain equations in $\mathcal{D}_n$ for each $n\in \mathbb{N}$\,.
\begin{lemma}\label{L3.2B}
Grant Assumptions \ref{assm 3.1}, \ref{assm 3.2}, and \ref{assm 3.3}. Then for each $n\in\mathbb{N}$, the following hold.
\begin{enumerate}
\item For $\hat{v}_2\in \mathcal{S}_2$, there exists an eigenpair $(\rho_{1,n},\psi_{1,n})\in \mathbb{R}\times \mathcal{B}_{\mathcal{D}_n}^+$, satisfying		
		\begin{equation}\label{EL3.2BA}
 \left\{\begin{aligned}
\rho_{1,n}\psi_{1,n}(i)&=\inf_{v_1\in V_1(i)}\bigg[\sum_{j\in S}\psi_{1,n}(j){\pi}_{ij}(v_1,\hat{v}_2(i))+c_1(i,v_1,\hat{v}_2(i))\psi_{1,n}(i)\bigg]~\text{for}~i\in \mathcal{D}_n,\\
\psi_{1,n}(i_0) &= 1.
 \end{aligned}
 \right.
\end{equation}
Moreover, we have		
\begin{equation}\label{EL3.2BB}
0 \leq \liminf_{n\to\infty}	\rho_{1,n} \leq \limsup_{n\to\infty}\rho_{1,n} \leq   \inf_{v_1 \in \mathscr{A}_1} \limsup_{ T \to \infty} \frac{1}{ T} \ln E_{i_0}^{v_1,\hat{v}_2} \Big[ e^{ \int^T_0  c_1(Y(t),v_1 (t),\hat{v}_2 (Y(t-))) dt}\Big].
\end{equation}			
	\item 	Similarly, for $\hat{v}_1\in \mathcal{S}_1$, there exists an eigenpair $(\rho_{2,n},\psi_{2,n})\in \mathbb{R}\times \mathcal{B}_{\mathcal{D}_n}^+$, satisfying	\begin{equation}\label{EL3.2BC}
 \left\{\begin{aligned}
\rho_{2,n}\psi_{2,n}(i)&=\inf_{v_2\in V_2(i)}\bigg[\sum_{j\in S}\psi_{2,n}(j)\pi_{ij}(\hat{v}_1(i),v_2)+c_2(i,\hat{v}_1(i),v_2)\psi_{2,n}(i)\bigg]~\text{for}~i\in \mathcal{D}_n,\\
\psi_{2,n}(i_0) &= 1.
 \end{aligned}
 \right.
\end{equation}
Moreover, we have
\begin{equation}\label{EL3.2BD}
0\leq \liminf_{n\to\infty}\rho_{2,n}\leq \limsup_{n\to\infty}\rho_{2,n} \leq  \inf_{v_2 \in\mathscr{A}_2} \limsup_{ T \to \infty} \frac{1}{ T} \ln E_{i_0}^{\hat{v}_1,v_2} \Big[ e^{ \int^T_0  c_2(Y(t),\hat{v}_1 (Y(t-)),v_2 (t)) dt}\Big].
\end{equation}
\end{enumerate}
\end{lemma}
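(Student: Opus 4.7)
Since parts (1) and (2) are entirely symmetric, the plan is to prove (1); part (2) follows by the same argument with the roles of the two players exchanged. We proceed in three steps: (I) recast \eqref{EL3.2BA} as a nonlinear eigenvalue problem on the positive cone of $\mathbb{R}^{\mathcal{D}_n}$ and solve it via a nonlinear Krein--Rutman/Perron--Frobenius theorem; (II) derive a Feynman--Kac stochastic representation of $\psi_{1,n}$ up to the first exit time $\tau_n$ of $\mathcal{D}_n$; (III) use this representation together with monotonicity in $n$ and the Foster--Lyapunov bound of Assumption \ref{assm 3.2} to derive the two inequalities in \eqref{EL3.2BB}.

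For step (I), since any $\psi\in\mathcal{B}^+_{\mathcal{D}_n}$ vanishes off $\mathcal{D}_n$, equation \eqref{EL3.2BA} reduces to $\rho\psi = G_n\psi$ on $\mathbb{R}^{\mathcal{D}_n}_+$ where
\[
G_n\psi(i) := \inf_{v_1\in V_1(i)}\Big[\sum_{j\in\mathcal{D}_n}\pi_{ij}(v_1,\hat{v}_2(i))\psi(j)+c_1(i,v_1,\hat{v}_2(i))\psi(i)\Big].
\]
By Lemma \ref{lemm 3.1} and Assumption \ref{assm 3.3} the infimum is attained and $G_n$ is continuous. Choosing $\lambda>0$ large enough (possible because $\mathcal{D}_n$ is finite and $\bar{\pi}_i<\infty$) makes $\tilde{G}_n := G_n+\lambda I$ a continuous, monotone, positively $1$-homogeneous, concave self-map of $\mathbb{R}^{\mathcal{D}_n}_+$; Assumptions \ref{assm 3.2} and \ref{assm 3.3}(iii) deliver irreducibility on this cone. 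Nussbaum's nonlinear Krein--Rutman theorem (applied as in \cite{BP},\cite{KP1},\cite{KP2}) then produces a principal eigenpair $(\tilde{\mu}_n,\psi_{1,n})\in(0,\infty)\times\mathcal{B}^+_{\mathcal{D}_n}$ with $\psi_{1,n}>0$ on $\mathcal{D}_n$. Setting $\rho_{1,n}:=\tilde{\mu}_n-\lambda$ and rescaling so that $\psi_{1,n}(i_0)=1$ delivers \eqref{EL3.2BA}.

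For step (II), let $\tau_n := \inf\{t\ge 0: Y(t)\notin\mathcal{D}_n\}$. The HJB inequality in \eqref{EL3.2BA} combined with Dynkin's formula for CTMDPs (valid under Assumption \ref{assm 3.1}) shows that, for every $v_1\in\mathcal{A}_1$, the process
\[
M_t := \exp\Big(\int_0^{t\wedge\tau_n}\bigl(c_1(Y(s),v_1(s),\hat{v}_2(Y(s-)))-\rho_{1,n}\bigr)\,ds\Big)\psi_{1,n}(Y(t\wedge\tau_n))
\]
is a $P^{v_1,\hat{v}_2}_i$-submartingale, and a true martingale under the stationary selector $v^*_{1,n}\in\mathcal{S}_1$ attaining the infimum in \eqref{EL3.2BA}. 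Since $\psi_{1,n}\equiv 0$ on $\mathcal{D}_n^c$, optional stopping yields
\[
\psi_{1,n}(i)\le E^{v_1,\hat{v}_2}_i\Big[e^{\int_0^T(c_1-\rho_{1,n})\,ds}\psi_{1,n}(Y(T))\,I_{\{\tau_n>T\}}\Big],\quad i\in\mathcal{D}_n,\ T>0,
\]
with equality under $v^*_{1,n}$.

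For step (III) and the right inequality in \eqref{EL3.2BB}: take $i=i_0$, bound $\psi_{1,n}(Y(T))\le\|\psi_{1,n}\|_\infty$, take logs, divide by $T$, let $T\to\infty$, then pass to the infimum over $v_1\in\mathcal{A}_1$ and $\limsup_n$. For the left inequality, first observe $\rho_{1,n+1}\ge\rho_{1,n}$: extending $\psi_{1,n}$ by $0$ to $\mathcal{D}_{n+1}$ produces $G_{n+1}\psi_{1,n}\ge\rho_{1,n}\psi_{1,n}$ on $\mathcal{D}_{n+1}$ (equality on $\mathcal{D}_n$, and on $\mathcal{D}_{n+1}\setminus\mathcal{D}_n$ the RHS vanishes while the LHS is nonnegative since $\pi_{ij}\ge 0$ for $j\neq i$), and a Collatz--Wielandt characterisation of the principal eigenvalue of $\tilde{G}_{n+1}$ (after a small additive perturbation rendering the subsolution strictly positive) gives the monotonicity. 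Finally, the equality form of the representation at $i_0$ under $v^*_{1,n}$, combined with $c_1\ge 0$ and $\psi_{1,n}(Y(T))\ge\min_{\mathcal{D}_n}\psi_{1,n}>0$ on $\{\tau_n>T\}$, yields $e^{\rho_{1,n}T}\ge(\min_{\mathcal{D}_n}\psi_{1,n})\,P^{v^*_{1,n},\hat{v}_2}_{i_0}(\tau_n>T)$. \emph{The hard part} is to conclude that the exit-probability decay rate $-\limsup_T T^{-1}\ln P^{v^*_{1,n},\hat{v}_2}_{i_0}(\tau_n>T)$ tends to $0$ as $n\to\infty$, \emph{uniformly} in the $n$-dependent selector $v^*_{1,n}$; this requires converting the Foster--Lyapunov bound of Assumption \ref{assm 3.2} into a selector-free supermartingale estimate built from $W$ that controls the exit probability from $\mathcal{D}_n$ independently of the selector.
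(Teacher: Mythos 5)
Your overall route is the same one the paper outsources to \cite{BP} (its proof of this lemma is a one-line citation): a nonlinear Krein--Rutman construction of the Dirichlet eigenpair on $\mathcal{D}_n$, the sub-/martingale Feynman--Kac representation up to $\tau_n\wedge T$, monotonicity of $\rho_{1,n}$, and the bound $\rho_{1,n}\le \limsup_T T^{-1}\ln E^{v_1,\hat v_2}_{i_0}\big[e^{\int_0^T c_1}\big]$ for every $v_1$, which gives the right-hand inequality in \eqref{EL3.2BB}; those parts are fine modulo standard verifications (attainment of the infimum, irreducibility of the truncated operator, Dynkin's formula under relaxed admissible strategies).

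The genuine gap is the left-hand inequality $0\le\liminf_n\rho_{1,n}$, which you explicitly leave open. Your reduction $\rho_{1,n}\ge \limsup_T T^{-1}\ln P^{v^*_{1,n},\hat v_2}_{i_0}(\tau_n>T)$ is correct, but the needed conclusion --- that the exit rate from $\mathcal{D}_n$ vanishes as $n\to\infty$ uniformly over the $n$-dependent minimizing selectors --- is exactly the nontrivial content. It cannot come from non-explosiveness or Assumption \ref{assm 3.1} alone: for a pure birth chain with unit rates, $P_{i_0}(\tau_n>T)$ decays at rate $1$ for every $n$, so the Foster--Lyapunov structure of Assumption \ref{assm 3.2} must enter essentially; and the selector-free $W$-supermartingale estimate you invoke is not immediate, since in the bounded-cost case \ref{assm 3.2}(a) the function $W$ is not assumed norm-like, so the natural bound $P_i(\text{exit }\mathcal{D}_n\text{ before hitting }\mathscr{K})\le W(i)/\inf_{\mathcal{D}_n^c}W$ need not tend to $0$. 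The cited reference avoids this issue by a different argument: using your monotonicity, $\rho_1:=\lim_n\rho_{1,n}$ exists, one passes to the limit in \eqref{EL3.2BA} to get a strictly positive $\psi_1\in L^{\infty}_W$ solving the whole-space equation, and then rules out $\rho_1<0$ as follows. Under a stationary minimizing selector, $c_1\ge 0$ gives $\sum_j \pi_{ij}\psi_1(j)\le \rho_1\psi_1(i)$, hence (after a localization via $\tau_n$ justified by the $W$-drift) $E^{v_1^*,\hat v_2}_{i_0}[\psi_1(Y(t))]\le e^{\rho_1 t}\to 0$; but irreducibility plus the uniform geometric drift toward $\mathscr{K}$ make the chain positive recurrent, so $\liminf_t P^{v_1^*,\hat v_2}_{i_0}(Y(t)=i_0)>0$ while $\psi_1(i_0)=1$, a contradiction. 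As written, your proposal proves only half of \eqref{EL3.2BB} and \eqref{EL3.2BD}; you need either an argument of the above type or an actual proof of the uniform exit-rate estimate you postulate.
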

\begin{proof}
Follows by analogous arguments as in \cite[Lemma~3.1, Lemma~3.3]{BP}. We omit the details.
\end{proof}
Next by taking limit $n\to \infty$ in the equations we show that the limiting equations admit eigenpairs in appropriate spaces. In particular, we have the following theorem.

\begin{thm}\label{theo 3.1}
Grant Assumptions \ref{assm 3.1}, \ref{assm 3.2}, and \ref{assm 3.3}. Then the following hold.
	\begin{enumerate}
		\item For $\hat{v}_2\in \mathcal{S}_2$, there exists a unique minimal eigenpair $(\rho_{1},\psi_{1})\in \mathbb{R}_{+}\times L^{1,\infty}_{W}$, $\psi_{1}>0$, satisfying		
		\begin{equation}\label{eq 3.1}
 \left\{\begin{aligned}
\rho_{1}\psi_{1}(i)&=\inf_{v_1\in V_1(i)}\bigg[\sum_{j\in S}\psi_{1}(j){\pi}_{ij}(v_1,\hat{v}_2(i))+c_1(i,v_1,\hat{v}_2(i))\psi_{1}(i)\bigg]~\text{for}~i\in S,\\
\psi_{1}(i_0) &= 1.
 \end{aligned}
 \right.
\end{equation}
Moreover, we have		
 \begin{equation}
			\rho_1 \ = \  \inf_{v_1 \in \mathscr{A}_1} \limsup_{ T \to \infty} \frac{1}{ T} \ln E_{i}^{v_1,\hat{v}_2} \Big[ e^{ \int^T_0  c_1(Y(t),v_1 (t),\hat{v}_2 (Y(t-))) dt}
			\Big] (:=\rho_{1}^{\hat{v}_2}=\inf_{v_1 \in \mathscr{A}_1}\rho_{1}^{v_1,\hat{v}_2}),\label{eq 3.3}
			\end{equation}	
			
	and there exists a finite set $\mathscr{B}_1\supset \mathscr{K}$, such that
			\begin{align}
			\psi_{1}(i)=\inf_{v_1\in\mathcal{S}_1}E^{v_1,\hat{v}_2}_i\bigg[e^{\int_{0}^{\hat{\tau}(\mathscr{B}_1)}(c_1(Y(t),v_1(Y(t-)),\hat{v}_2(Y(t-)))-\rho_{1})dt}\psi_{1}(Y({\hat{\tau}(\mathscr{B}_1)}))\bigg](:=\psi_{1}^{\hat{v}_2}(i))~\forall i\in \mathscr{B}_1^c,\label{eq 3.2}
			\end{align}
			where $\hat{\tau}(\mathscr{B}_1)= \tau(\mathscr{B}_1^c)= \inf\{t:Y(t)\in \mathscr{B}_1\}=:\tilde{\tau}_1$.

	\item 	Similarly, for $\hat{v}_1\in \mathcal{S}_1$, there exists a unique minimal eigenpair $(\rho_{2},\psi_{2})\in \mathbb{R}_{+}\times L^{1,\infty}_{W}$, $\psi_{2}>0$ satisfying	
	\begin{equation}\label{eq 3.4}
 \left\{\begin{aligned}
\rho_{2}\psi_{2}(i)&=\inf_{v_2\in V_2(i)}\bigg[\sum_{j\in S}\psi_{2}(j)\pi_{ij}(\hat{v}_1(i),v_2)+c_2(i,\hat{v}_1(i),v_2)\psi_{2}(i)\bigg]~\text{for}~i\in S,\\
\psi_{2}(i_0) &= 1.
 \end{aligned}
 \right.
\end{equation}
		Moreover, we have
		\begin{equation}
		\rho_2\ = \  \inf_{v_2 \in\mathscr{A}_2} \limsup_{ T \to \infty} \frac{1}{ T} \ln E_{i}^{\hat{v}_1,v_2} \Big[ e^{ \int^T_0  c_2(Y(t),\hat{v}_1 (Y(t-)),v_2 (t)) dt}
		\Big] (:=\rho_{2}^{\hat{v}_1}=\inf_{v_2 \in \mathscr{A}_2}\rho_{2}^{\hat{v}_1,v_2}) ,\label{eq 3.6}
		\end{equation}
		and there exists a finite set $\mathscr{B}_2\supset \mathscr{K}$,  such that
		\begin{align}
		\psi_{2}(i)=\inf_{v_2\in\mathcal{S}_2}E^{\hat{v}_1,v_2}_i\bigg[e^{\int_{0}^{\hat{\tau}(\mathscr{B}_2)}(c_2(Y(t),\hat{v}_1(Y(t-)),v_2(Y(t-)))-\rho_{2})dt}\psi_{2}(Y({\hat{\tau}(\mathscr{B}_2)}))\bigg](:=\psi_{2}^{\hat{v}_1}(i))~\forall i\in \mathscr{B}_2^c,\label{eq 3.5}
		\end{align}
		where $\hat{\tau}(\mathscr{B}_2)=\tau(\mathscr{B}_2^c)=\inf\{t:Y(t)\in \mathscr{B}_2\}=:\tilde{\tau}_2$.
	\end{enumerate}
\end{thm}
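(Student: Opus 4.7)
The plan is to obtain the eigenpair on all of $S$ as a limit of the finite-domain eigenpairs $(\rho_{1,n},\psi_{1,n})$ supplied by Lemma~\ref{L3.2B}, exploiting the Lyapunov function $W$ of Assumption~\ref{assm 3.2} to extract uniform a priori bounds. First, since $\rho_{1,n}\ge 0$ and \eqref{EL3.2BB} supplies a uniform upper bound, one passes to a subsequence along which $\rho_{1,n}\to\rho_1\in\mathbb{R}_{+}$. Second, one shows that $W$ serves as an appropriate super-solution: in the bounded-cost case, picking any $\alpha<\gamma-\|c_1\|_\infty$ gives $\sum_j W(j)\pi_{ij}(v_1,\hat{v}_2(i))+c_1(i,v_1,\hat{v}_2(i))W(i)\le \alpha W(i)+C_4 I_{\mathscr{K}}(i)$, and a comparison argument inside $\mathcal{D}_n$ combined with the normalization $\psi_{1,n}(i_0)=1$ yields $\psi_{1,n}(i)\le K W(i)$ uniformly in $n$. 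The unbounded case is handled analogously, using the norm-likeness of $\ell-c_1$ to absorb the cost outside a finite set.

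With these bounds a Cantor diagonalization produces a pointwise limit $\psi_1: S\to [0,\infty)$ with $\psi_1\in L^\infty_W$ and $\psi_1(i_0)=1$. Passing to the limit in \eqref{EL3.2BA} requires the continuity on $V_1(i)$ supplied by Lemma~\ref{lemm 3.1}, together with a dominated-convergence step for the sum $\sum_{j}\pi_{ij}(v_1,\hat{v}_2(i))\psi_{1,n}(j)$; the uniform bound $\psi_{1,n}\le KW$ combined with the finiteness of $\sum_j W(j)|\pi_{ij}|$ inherited from Assumption~\ref{assm 3.1}(i) justifies the interchange uniformly in $v_1\in V_1(i)$, which is what preserves the infimum in the limit. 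After rescaling so that $\|\psi_1\|^\infty_W\le 1$ one obtains $\psi_1\in L^{1,\infty}_W$; strict positivity follows from the irreducibility built into Assumption~\ref{assm 3.2} and the accessibility condition in Assumption~\ref{assm 3.3}(iii).

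For the stochastic representation \eqref{eq 3.2} and the variational identity \eqref{eq 3.3}, the route is a Feynman--Kac / Dynkin argument for the non-explosive process $\{Y(t)\}$. Choose a finite set $\mathscr{B}_1\supset\mathscr{K}$ on which $\ell-c_1>\rho_1$ outside $\mathscr{B}_1$ (in the unbounded case; the bounded case is similar with $\gamma$ in place of $\ell$). For arbitrary $v_1\in\mathcal{A}_1$ consider the process $M_t=\exp\{\int_0^{t\wedge\tilde{\tau}_1}(c_1(Y(s),v_1(s),\hat{v}_2(Y(s-)))-\rho_1)\,ds\}\,\psi_1(Y(t\wedge\tilde{\tau}_1))$. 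The HJB equation \eqref{eq 3.1} makes $M_t$ a nonnegative local supermartingale under $P^{v_1,\hat{v}_2}_i$, degenerating to a local martingale when $v_1$ realises the infimum pointwise. Taking expectations, using the Lyapunov bound to dominate $\psi_1(Y(\tilde\tau_1))\le KW(Y(\tilde\tau_1))$ so optional stopping is legitimate, and minimising over $v_1\in\mathcal{S}_1$ delivers \eqref{eq 3.2}. The matching two-sided inequalities for \eqref{eq 3.3} come from iterating this representation over successive excursions between visits to $\mathscr{B}_1$ and invoking the Lyapunov estimate to propagate the bound in $T$. Uniqueness of the \emph{minimal} eigenpair follows from the same supermartingale comparison: any other eigenpair $(\tilde\rho_1,\tilde\psi_1)$ of the stated form forces $\tilde\rho_1\ge\rho_1$, and the joint normalisation at $i_0$ then pins down $\tilde\psi_1=\psi_1$. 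Part (2) of the theorem is obtained by an identical argument with the roles of the two players interchanged.

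The main obstacle is expected to be the uniform passage to the limit of the infimum over $V_1(i)$ inside the infinite sum $\sum_{j\in S}\pi_{ij}(v_1,\hat{v}_2(i))\psi_{1,n}(j)$: because $\pi_{ij}$ changes sign at $j=i$ and ranges over infinitely many indices, standard dominated convergence is not enough and the tightness provided by the $W$-bound must be combined with Lemma~\ref{lemm 3.1} to interchange limit and infimum. A closely related subtlety is verifying that $E^{v_1,\hat{v}_2}_i[\exp\{\int_0^{T\wedge\tilde\tau_1}(c_1-\rho_1)ds\}W(Y(T\wedge\tilde\tau_1))]$ stays bounded as $T\to\infty$ on excursions out of $\mathscr{B}_1$; this is what legitimises the stochastic representation in the unbounded-cost regime, and it is precisely what dictates the choice of $\mathscr{B}_1$.
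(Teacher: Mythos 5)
Your overall route is the paper's: pass to the limit in the finite-domain eigenpairs of Lemma \ref{L3.2B} using the Lyapunov function $W$ as a barrier, prove the stochastic representation \eqref{eq 3.2} by a Feynman--Kac/Dynkin argument on the exit time from a finite set $\mathscr{B}_1\supset\mathscr{K}$, and get minimality/uniqueness by a comparison argument plus irreducibility. However, the martingale structure you invoke for \eqref{eq 3.2} is inverted, and as written that step fails. Because \eqref{eq 3.1} involves an \emph{infimum}, for an arbitrary $v_1$ one has $\sum_{j}\pi_{ij}(v_1,\hat v_2(i))\psi_1(j)+(c_1(i,v_1,\hat v_2(i))-\rho_1)\psi_1(i)\ge 0$, so $M_t=e^{\int_0^{t\wedge\tilde\tau_1}(c_1-\rho_1)ds}\psi_1(Y(t\wedge\tilde\tau_1))$ is a local \emph{sub}martingale, and it is a local martingale only along a minimizing selector. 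Your claim that it is a supermartingale for every $v_1$ would yield $\psi_1(i)\ge E^{v_1,\hat v_2}_i[\cdots]$ for all $v_1$, i.e.\ a bound by the supremum; it can never produce the inequality $\psi_1(i)\le E^{v_1,\hat v_2}_i[\cdots]$ for each $v_1$ that the representation as an infimum requires. The correct split is: Fatou along a minimizing selector gives the ``$\ge\inf$'' half essentially for free, while the ``$\le$'' half is the delicate one precisely because Fatou goes the wrong way for submartingales; the paper obtains it by working with the finite-domain solutions $\psi_{1,n}\le W$ from \eqref{EL3.2BA}, stopping at $\hat\tau(\mathscr{B}_1)\wedge\tau(\mathcal{D}_n)\wedge T$, and invoking the estimates \eqref{ELY1A}--\eqref{ELY1B}.

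Two further points need tightening. First, it is not enough that $E^{v_1,\hat v_2}_i\big[e^{\int_0^{T\wedge\tilde\tau_1}(c_1-\rho_1)ds}W(Y(T\wedge\tilde\tau_1))\big]$ ``stays bounded'' as $T\to\infty$: you need the contribution on the event $\{T\le \hat\tau(\mathscr{B}_1)\wedge\tau(\mathcal{D}_n)\}$ to \emph{vanish}, which is exactly what the choice of $\mathscr{B}_1$ buys (outside $\mathscr{B}_1$ one has $\|c_1\|_\infty-\rho_{1,n}-\gamma<0$, resp.\ $c_1-\rho_{1,n}-\ell<0$, giving the decaying factor $e^{(\|c_1\|_\infty-\rho_{1,n}-\gamma)T}W(i)$). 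Second, your uniqueness sentence is too quick: the normalization $\tilde\psi_1(i_0)=\psi_1(i_0)=1$ alone does not pin down $\tilde\psi_1=\psi_1$; one must compare the two eigenfunctions through the representation \eqref{eq 3.2}, set $\hat\kappa=\min_{\mathscr{B}_1}\tilde\psi_1/\psi_1$ so that $\tilde\psi_1-\hat\kappa\psi_1\ge 0$ with equality at some point of $\mathscr{B}_1$, and then use irreducibility of the chain to conclude $\tilde\psi_1\equiv\hat\kappa\psi_1$ before invoking the normalization. (Your excursion-iteration sketch for \eqref{eq 3.3} is also left unsubstantiated; the paper instead derives it from \eqref{EL3.2BB} together with a truncation of the running cost.) These defects are fixable, but as written the representation step and the uniqueness step contain genuine gaps.
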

\begin{proof}
Since $c_1 \geq 0$, using Assumption \ref{assm 3.2}, we deduce that there exists a finite set $\mathscr{B}_1$ containig $\mathscr{K}$ such that
\begin{itemize}
\item Under Assumption  \ref{assm 3.2} (a)
\begin{equation*}
(\sup_{(u_1,u_2)\in U_{1}(i)\times U_{2}(i)} c_{1}(i, u_1, u_2) - \rho_{1,n}) < \gamma \quad \forall \,\,\, i\in \mathscr{B}_1^{c} \quad\text{and all $n$ large enough}\,.
\end{equation*}
\item Under Assumption  \ref{assm 3.2} (b)
\begin{equation*}
(\sup_{(u_1,u_2)\in U_{1}(i)\times U_{2}(i)} c_{1}(i, u_1, u_2) - \rho_{1,n}) < \ell(i) \quad \forall \,\,\, i\in \mathscr{B}_1^{c} \quad\text{and all $n$ large enough}\,.
\end{equation*}
\end{itemize}
Then applying It\^{o}-Dynkin formula, from Assumption \ref{assm 3.2}, we have the following estimates:
\begin{itemize}
\item Under Assumption \ref{assm 3.2}(a):
\begin{equation}\label{ELY1A}
E^{v_1, \hat{v}_2}_i\bigg[e^{\hat{\tau}(\mathscr{B}_1)\gamma}W(Y({\hat{\tau}(\mathscr{B}_1)}))\bigg]\leq W(i)\,\,~\forall i\in \mathscr{B}_1^c\,.
\end{equation}
\item Under Assumption \ref{assm 3.2}(b):
\begin{equation}\label{ELY1B}
E^{v_1, \hat{v}_2}_i\bigg[e^{\int_{0}^{\hat{\tau}(\mathscr{B}_1)}\ell(Y(t))dt}W(Y({\hat{\tau}(\mathscr{B}_1)}))\bigg] \leq W(i)\,\,~\forall i\in \mathscr{B}_1^c\,.
\end{equation}
\end{itemize}
Now as in \cite[Lemma~3.4]{BP}, using the Lyapunov function $W$ we construct a barrier. Then following arguments similar to \cite[Lemma~3.4]{BP} and letting $n\to\infty$, there exists $(\rho_{1},\psi_{1})\in \mathbb{R}_{+}\times L^{1,\infty}_{W}$, $\psi_{1}>0$, satisfying (\ref{eq 3.1}). By truncating the running cost $c_1$, one can show that $\rho_{1}$ satisfies (\ref{eq 3.3}) (see, \cite[Lemma~3.5]{BP})\,.

Next we prove the stochastic representation (\ref{eq 3.2}). Applying It\^{o}-Dynkin formula and Fatou's lemma, for any minimizing selector $v_1^*$ of (\ref{eq 3.1}) we have
\begin{align}\label{ET3.1BA1}
\psi_{1}(i) &\geq E^{v_1^*,\hat{v}_2}_i\bigg[e^{\int_{0}^{\hat{\tau}(\mathscr{B}_1)}(c_1(Y(t),v_1^*(Y(t-)),\hat{v}_2(Y(t-)))-\rho_{1})dt}\psi_{1}(Y({\hat{\tau}(\mathscr{B}_1)}))\bigg]\nonumber\\
&\geq \inf_{v_1\in \mathcal{S}_1}E^{v_1,\hat{v}_2}_i\bigg[e^{\int_{0}^{\hat{\tau}(\mathscr{B}_1)}(c_1(Y(t),v_1(Y(t-)),\hat{v}_2(Y(t-)))-\rho_{1})dt}\psi_{1}(Y({\hat{\tau}(\mathscr{B}_1)}))\bigg]~\forall i\in \mathscr{B}_1^c\,.
\end{align}
Again, by applying It\^{o}-Dynkin formula, from (\ref{EL3.2BA}) for any $v_1\in \mathcal{S}_1$, $T>0$ and $i\in \mathcal{D}_n\cap \mathscr{B}_1^c$ it follows that
\begin{align}\label{ET3.1BA2}
\psi_{1,n}(i) &\leq E^{v_1,\hat{v}_2}_i\bigg[e^{\int_{0}^{\hat{\tau}(\mathscr{B}_1)\wedge\tau(\mathcal{D}_n)\wedge T}(c_1(Y(t),v_1(Y(t-)),\hat{v}_2(Y(t-)))-\rho_{1,n})dt}\psi_{1,n}(Y(\hat{\tau}(\mathscr{B}_1)\wedge\tau(\mathcal{D}_n)\wedge T))\bigg]\nonumber\\
&\leq E^{v_1,\hat{v}_2}_i\bigg[e^{\int_{0}^{\hat{\tau}(\mathscr{B}_1)}(c_1(Y(t),v_1(Y(t-)),\hat{v}_2(Y(t-)))-\rho_{1,n})dt}\psi_{1,n}(Y({\hat{\tau}(\mathscr{B}_1)}))I_{\{\hat{\tau}(\mathscr{B}_1)\leq \tau(\mathcal{D}_n)\wedge T\}}\bigg]\nonumber\\
&\,\,\, + E^{v_1,\hat{v}_2}_i\bigg[e^{\int_{0}^{T}(c_1(Y(t),v_1(Y(t-)),\hat{v}_2(Y(t-)))-\rho_{1,n})dt}\psi_{1,n}(Y(T))I_{\{T \leq\hat{\tau}(\mathscr{B}_1)\wedge \tau(\mathcal{D}_n)\}}\bigg]\,.
\end{align}
Using (\ref{ELY1A}) and the fact that $\psi_{1,n} \leq W$ (by our construction), we have
\begin{align*}
&E^{v_1,\hat{v}_2}_i\bigg[e^{\int_{0}^{T}(c_1(Y(t),v_1(Y(t-)),\hat{v}_2(Y(t-)))-\rho_{1,n})dt}\psi_{1,n}(Y(T))I_{\{T \leq\hat{\tau}(\mathscr{B}_1)\wedge \tau(\mathcal{D}_n)\}}\bigg] \nonumber\\
&\leq e^{(\|c_{1}\|_{\infty} - \rho_{1,n} - \gamma)T}E^{v_1,\hat{v}_2}_i\bigg[e^{T\gamma}W(Y(T))I_{\{T \leq\hat{\tau}(\mathscr{B}_1)\wedge \tau(\mathcal{D}_n)\}}\bigg]\nonumber\\
&\leq e^{(\|c_{1}\|_{\infty} - \rho_{1,n} - \gamma)T} W(i)\,.
\end{align*}Thus, letting $T\to\infty$ from (\ref{ET3.1BA2}) we get
\begin{align*}
\psi_{1,n}(i)\leq E^{v_1,\hat{v}_2}_i\bigg[e^{\int_{0}^{\hat{\tau}(\mathscr{B}_1)}(c_1(Y(t),v_1((Y(t-))),\hat{v}_2(Y(t-)))-\rho_{1,n})dt}\psi_{1,n}(Y(\hat{\tau}(\mathscr{B}_1)))I_{\{\hat{\tau}(\mathscr{B}_1)\leq \tau(\mathcal{D}_n)\}}\bigg]
\end{align*}
Now, since $\psi_{1,n} \leq W$ using (\ref{ELY1A}) by dominated convergence theorem it follows that
\begin{align}\label{ET3.1BA3}
\psi_{1}(i)\leq E^{v_1,\hat{v}_2}_i\bigg[e^{\int_{0}^{\hat{\tau}(\mathscr{B}_1)}(c_1(Y(t),v_1(Y(t-)),\hat{v}_2(Y(t-)))-\rho_{1})dt}\psi_{1}(Y(\hat{\tau}(\mathscr{B}_1)))\bigg]~\forall i\in \mathscr{B}_1^c\,.
\end{align}Since $v_1\in\mathcal{S}_1$ is arbitrary, combining (\ref{ET3.1BA1}) and (\ref{ET3.1BA3}), we obtain (\ref{eq 3.2}). Also, it it clear from the proof that for any minimizing selector $v_1^*$ of (\ref{eq 3.1}) we have
\begin{align}\label{ET3.1B}
\psi_{1}(i)= E^{v_1^*,\hat{v}_2}_i\bigg[e^{\int_{0}^{\hat{\tau}(\mathscr{B}_1)}(c_1(Y(t),v_1^*(Y(t-)),\hat{v}_2(Y(t-)))-\rho_{1})dt}\psi_{1}(Y({\hat{\tau}(\mathscr{B}_1)}))\bigg]~\forall i\in \mathscr{B}_1^c\,.
\end{align} Using (\ref{ELY1B}) it is easy to check that the same conclusion holds under Assumption \ref{assm 3.2}(b)\,.

Now exploiting the stochastic representation (\ref{eq 3.2}), we show that $(\rho_{1},\psi_{1})\in \mathbb{R}_{+}\times L^{1,\infty}_{W}$ is the minimal eigenpair. Suppose $(\hat{\rho}_{1},\hat{\psi}_{1})\in \mathbb{R}_{+}\times L^{1,\infty}_{W}$, $\hat{\psi}_{1}>0$ is an eigenpair satisfying
\begin{equation}\label{ET3.1C}
 \left\{\begin{aligned}
\hat{\rho}_{1}\hat{\psi}_{1}(i)&=\inf_{v_1\in V_1(i)}\bigg[\sum_{j\in S}\hat{\psi}_{1}(j){\pi}_{ij}(v_1,\hat{v}_2(i))+c_1(i,v_1,\hat{v}_2(i))\hat{\psi}_{1}(i)\bigg]~\text{for}~i\in S,\\
\hat{\psi}_{1}(i_0) &= 1.
 \end{aligned}
 \right.
\end{equation}
 We want to show that $\rho_{1} \leq \hat{\rho}_{1}$. If not suppose that $\rho_{1} > \hat{\rho}_{1}$. Then, for any minimizing selector $\hat{v}_1^*$ of (\ref{ET3.1C}), applying It\^{o}-Dynkin formula and Fatou's lemma, we obtain
\begin{equation}\label{ET3.1D}
\hat{\psi}_{1}(i)\geq E^{\hat{v}_1^*,\hat{v}_2}_i\bigg[e^{\int_{0}^{\hat{\tau}(\mathscr{B}_1)}(c_1(Y(t),\hat{v}_1^*(Y(t-)),\hat{v}_2(Y(t-)))-\hat{\rho}_{1})dt}\hat{\psi}_{1}(Y({\hat{\tau}(\mathscr{B}_1)}))\bigg]~\forall i\in \mathscr{B}_1^c\,.
\end{equation} Whereas from (\ref{eq 3.2}), we have
\begin{equation}\label{ET3.1E}
\psi_{1}(i)\leq E^{\hat{v}_1^*,\hat{v}_2}_i\bigg[e^{\int_{0}^{\hat{\tau}(\mathscr{B}_1)}(c_1(Y(t),\hat{v}_1^*(Y(t-)),\hat{v}_2(Y(t-)))-\hat{\rho}_{1})dt}\psi_{1}(Y({\hat{\tau}(\mathscr{B}_1)}))\bigg]~\forall i\in \mathscr{B}_1^c\,.
\end{equation} Let $\hat{\kappa} := \min_{\mathscr{B}_1}\frac{\hat{\psi}_1}{\psi_1}$\,. Hence, from (\ref{ET3.1D}) and (\ref{ET3.1E}) it follows that $(\hat{\psi}_1 - \hat{\kappa}\psi_1) \geq 0$ in $S$ and $(\hat{\psi}_1 - \hat{\kappa}\psi_1)(\Tilde{i}_0) = 0$ for some $\Tilde{i}_0\in \mathscr{B}_1$\,. Now, combining (\ref{eq 3.1}) and (\ref{ET3.1C}) we deduce that
\begin{equation}\label{ET3.1F}
\bigg[\sum_{j\neq \Tilde{i}_0}(\hat{\psi}_1 - \hat{\kappa}\psi_1)(j){\pi}_{\Tilde{i}_0 j}(\hat{v}_1^*(\Tilde{i}_0),\hat{v}_2(\Tilde{i}_0))\bigg] \equiv 0\,.
\end{equation} Since $Y(t)$ is irreducible under $(\hat{v}_1^*,\hat{v}_2)$, in view of (\ref{ET3.1F}) it is clear that $(\hat{\psi}_1 - \hat{\kappa}\psi_1)\equiv 0$. Again, since $\hat{\psi}_1(i_0) = \psi_1(i_0) = $1, we get $\hat{\psi}_1 \equiv \psi_1$. But this is a contradiction to the fact that $\rho_{1} > \hat{\rho}_{1}$. Thus we deduce that $(\rho_{1},\psi_{1})\in \mathbb{R}_{+}\times L^{1,\infty}_{W}$ is the minimal eigenpair. Following the above argument one can show that any eigenfunction satisfying (\ref{eq 3.2}) is unique upto a scalar multiplication. Also, by the similar argument, one can show that there exists a minimal eigenpair $(\rho_{2},\psi_{2})\in \mathbb{R}_{+}\times L^{1,\infty}_{W}$ satisfying (\ref{eq 3.4}), (\ref{eq 3.6}) and (\ref{eq 3.5}). This completes the proof.
\end{proof}
To proceed further we establish some technical results needed later.

\begin{lemma}\label{lemm 3.4}
 Let Assumptions \ref{assm 3.1}, \ref{assm 3.2}, and \ref{assm 3.3} hold. Then the maps $\hat{v}_1\rightarrow\psi_{2}^{\hat{v}_1}$ from $\mathcal{S}_1\rightarrow L^\infty_{W}$, $\hat{v}_1\rightarrow\rho_{2}^{\hat{v}_1}$ from $\mathcal{S}_1\rightarrow \mathbb{R}_{+}$, $\hat{v}_2\rightarrow\psi_{1}^{\hat{v}_2}$ from $\mathcal{S}_2\rightarrow L^\infty_{W}$, and $\hat{v}_2\rightarrow\rho_{1}^{\hat{v}_2}$ from $\mathcal{S}_2\rightarrow \mathbb{R}_{+}$ are continuous.
\end{lemma}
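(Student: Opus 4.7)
The plan is to prove sequential continuity of $\hat{v}_2 \mapsto (\rho_1^{\hat{v}_2}, \psi_1^{\hat{v}_2})$; the map $\hat{v}_1 \mapsto (\rho_2^{\hat{v}_1}, \psi_2^{\hat{v}_1})$ is symmetric. Fix $\hat{v}_2^n \to \hat{v}_2$ in $\mathcal{S}_2$ (so $\hat{v}_2^n(i) \to \hat{v}_2(i)$ weakly in $V_2(i)$ for each $i \in S$) and denote the corresponding minimal eigenpairs by $(\rho_{1,n}, \psi_{1,n}) := (\rho_1^{\hat{v}_2^n}, \psi_1^{\hat{v}_2^n})$. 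By \eqref{eq 3.3} the sequence $\rho_{1,n}$ is uniformly bounded (by $\|c_1\|_\infty$ under Assumption \ref{assm 3.2}(a), with a parallel truncation bound under (b)), so the finite barrier set $\mathscr{B}_1$ constructed in the proof of Theorem \ref{theo 3.1} can be chosen independently of $n$ and the Lyapunov estimates \eqref{ELY1A}/\eqref{ELY1B} hold uniformly in $n$. Since $\|\psi_{1,n}\|_W^\infty \leq 1$, a standard diagonal extraction on the countable set $S$, combined with compactness of each $V_1(i)$, yields a subsequence (not relabelled) along which $\rho_{1,n} \to \tilde{\rho}_1 \in \mathbb{R}_+$, $\psi_{1,n}(i) \to \tilde{\psi}_1(i) \leq W(i)$ for every $i$ (with $\tilde{\psi}_1(i_0) = 1$), and the minimizing selectors $v_{1,n}^*$ of \eqref{eq 3.1} converge to some $v_1^* \in \mathcal{S}_1$.

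To pass to the limit in \eqref{eq 3.1}, for each fixed $i$ and $v_1 \in V_1(i)$ Lemma \ref{lemm 3.1} gives $c_1(i,v_1,\hat{v}_2^n(i)) \to c_1(i,v_1,\hat{v}_2(i))$, and Assumption \ref{assm 3.3}(ii) provides $\sum_j W(j)\pi_{ij}(v_1,\hat{v}_2^n(i)) \to \sum_j W(j)\pi_{ij}(v_1,\hat{v}_2(i))$. A Scheff\'e-type step applied to the non-negative off-diagonal part of $\pi_{i\cdot}(v_1,\hat{v}_2^n(i))$ dominated by the envelope $W$, together with pointwise convergence of the diagonal term, yields
\begin{equation*}
\sum_{j\in S}\psi_{1,n}(j)\pi_{ij}(v_1,\hat{v}_2^n(i)) \;\longrightarrow\; \sum_{j\in S}\tilde{\psi}_1(j)\pi_{ij}(v_1,\hat{v}_2(i)).
\end{equation*}
Inserting the minimizer $v_{1,n}^*$ and also an arbitrary fixed $v_1$ into \eqref{eq 3.1} and letting $n\to\infty$ gives matching lower and upper bounds on $\tilde{\rho}_1\tilde{\psi}_1(i)$, so $(\tilde{\rho}_1,\tilde{\psi}_1)$ is an eigenpair for $\hat{v}_2$ satisfying \eqref{eq 3.1} with $v_1^*$ as a minimizing selector.

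Identification with the minimal eigenpair $(\rho_1^{\hat{v}_2},\psi_1^{\hat{v}_2})$ is then carried out via the stochastic representation \eqref{eq 3.2}. Since $Y(\tilde{\tau}_1) \in \mathscr{B}_1$, which is finite, $\psi_{1,n}(Y(\tilde{\tau}_1)) \to \tilde{\psi}_1(Y(\tilde{\tau}_1))$ is automatic. The integrand in \eqref{eq 3.2} is dominated uniformly in $n$ by $(\max_{\mathscr{B}_1} W)\, e^{\gamma \tilde{\tau}_1}$ under Assumption \ref{assm 3.2}(a), with the analogous $\ell$-weighted dominant under (b); in either case its expectation is bounded by $W(i)$ through \eqref{ELY1A}/\eqref{ELY1B}. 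Weak convergence of the controlled process laws under $(v_{1,n}^*,\hat{v}_2^n)$ to those under $(v_1^*,\hat{v}_2)$, a standard consequence of Assumption \ref{assm 3.3} and the uniform Lyapunov bound (cf.\ \cite{BP}), together with dominated convergence, transports \eqref{eq 3.2} to the limit. The uniqueness argument from the proof of Theorem \ref{theo 3.1} (comparison of $\tilde{\psi}_1$ with a scalar multiple of $\psi_1^{\hat{v}_2}$ using irreducibility) then forces $\tilde{\rho}_1 = \rho_1^{\hat{v}_2}$ and $\tilde{\psi}_1 \equiv \psi_1^{\hat{v}_2}$. Since every subsequence contains a further subsequence with the same limit, the full sequence converges, giving continuity of the scalar map and pointwise convergence $\psi_{1,n}(i) \to \psi_1^{\hat{v}_2}(i)$.

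The main technical obstacle is upgrading this pointwise convergence to norm convergence in $L^\infty_W$, since on the infinite state space this does not follow automatically from a uniform $W$-envelope. I would handle it by returning once more to \eqref{eq 3.2}: for $i \in \mathscr{B}_1^c$ one has
\begin{equation*}
\frac{|\psi_{1,n}(i) - \psi_1^{\hat{v}_2}(i)|}{W(i)} \;\leq\; \Bigl(\max_{j\in\mathscr{B}_1}|\psi_{1,n}(j)-\psi_1^{\hat{v}_2}(j)|\Bigr)\frac{\sup_{v_1} E_i^{v_1,\hat{v}_2^n}[e^{\gamma \tilde{\tau}_1}]}{W(i)} \;+\; \varepsilon_n(i),
\end{equation*}
where the leading factor vanishes by pointwise convergence on the finite set $\mathscr{B}_1$, the middle factor is bounded by $1$ via \eqref{ELY1A}, and the error $\varepsilon_n(i)$ captures the discrepancy between the two expectation kernels and the shift $\rho_{1,n}-\rho_1^{\hat{v}_2}$. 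A slight strengthening of the Lyapunov bound controlling $E_i^{\cdot}[\tilde{\tau}_1 e^{\gamma \tilde{\tau}_1}]/W(i)$, together with $\rho_{1,n}\to\rho_1^{\hat{v}_2}$, gives $\sup_{i\in\mathscr{B}_1^c}\varepsilon_n(i)\to 0$; on the finite set $\mathscr{B}_1$ convergence in $L^\infty_W$ is automatic, and one concludes $\|\psi_{1,n}-\psi_1^{\hat{v}_2}\|_W^\infty\to 0$.
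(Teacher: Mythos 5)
Your first half follows the paper's own route: the uniform bound on $\rho_{1,n}$ coming from \eqref{EL3.2BB}--\eqref{eq 3.3}, diagonal extraction inside the ball $L^{1,\infty}_W$, convergence of minimizing selectors by compactness of $\mathcal{S}_1$ in the product topology, and passage to the limit in \eqref{eq 3.1} for a fixed $v_1$ and for the selectors $v_{1,n}^*$ using Lemma \ref{lemm 3.1}, Assumption \ref{assm 3.3}(ii) and a Fatou/dominated-convergence argument with the envelope $W$ (the paper invokes the generalized Fatou lemma of \cite{HL}); this part is sound and yields, as in \eqref{eq 3.13}, that the limit $(\tilde{\rho}_1,\tilde{\psi}_1)$ is an eigenpair for the limit strategy.

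The gap is in your identification step. You transport the stochastic representation \eqref{eq 3.2} through the limit $n\to\infty$ by invoking ``weak convergence of the controlled process laws under $(v_{1,n}^*,\hat{v}_2^n)$ to those under $(v_1^*,\hat{v}_2)$'' as a standard consequence of Assumption \ref{assm 3.3} and the Lyapunov bound, citing \cite{BP}. No such continuity-in-strategies statement is available there (that is a single-controller problem), and for unbounded transition rates pointwise convergence of stationary strategies does not by itself give convergence of path laws; even granting it, the functional $e^{\int_0^{\hat{\tau}(\mathscr{B}_1)}(c_1-\rho)dt}\psi_{1,n}(Y(\hat{\tau}(\mathscr{B}_1)))$ involves the hitting time $\hat{\tau}(\mathscr{B}_1)$, which is not a continuous function of the path, so weak convergence plus uniform integrability is not enough without a further argument. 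This machinery is also unnecessary: the paper identifies the limit by working only with the limit strategies --- minimality gives $\tilde{\rho}_1\geq\rho_1^{\hat{v}_2}$, and, assuming strict inequality, an It\^o--Dynkin argument applied directly to the limit equation produces the subsolution inequality \eqref{EL3.2B} for $\tilde{\psi}_1$, which together with the representation \eqref{EL3.2A} of $\psi_1^{\hat{v}_2}$, the scalar-multiple comparison $\theta=\max_{\mathscr{B}_1}\psi_1/\tilde{\psi}_1$ and irreducibility forces a contradiction. Replacing your law-convergence step by this argument closes the gap. Finally, your last paragraph on upgrading pointwise convergence to $L^\infty_W$-norm convergence rests on a bound for $E_i[\tilde{\tau}_1 e^{\gamma\tilde{\tau}_1}]/W(i)$ that does not follow from Assumption \ref{assm 3.2}; it is an added hypothesis, and it is not needed --- the paper establishes (and later, in Lemma \ref{lemm 4.1}, only uses) pointwise convergence within the unit ball of $L^\infty_W$, again exploited through the $W$-envelope.
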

\begin{proof}
	Let $\{v_{2,n}\}$ be a sequence in $\mathcal{S}_2$ such that $v_{2,n}\rightarrow \tilde{v}_2$ in $\mathcal{S}_2$, i.e., for each $i \in S, \, v_{2,n}(i) \to \tilde{v}_2(i) \, \, \mbox{in} \,  V_2(i) $.
	Now by Theorem \ref{theo 3.1},  there exists $(\rho_{1}^{v_{2,n}},\psi_{1}^{v_{2,n}})\in \mathbb{R}_{+}\times L^{1,\infty}_{W}$, $\psi_{1}^{v_{2,n}}>0$ satisfying
	\begin{align}
	\rho_{1}^{v_{2,n}}\psi_{1}^{v_{2,n}}(i)=\inf_{v_1\in V_1(i)}\biggl[\sum_{j\in S}\psi_{1}^{v_{2,n}}(j)\pi_{ij}(v_1,v_{2,n}(i))+c_1(i,v_1,v_{2,n}(i))\psi_{1}^{v_{2,n}}(i)\biggr],\label{eq 3.7}
	\end{align}
with $\psi_{1}^{v_{2,n}}(i_0)=1$.
	Now, since $\psi_{1}^{v_{2,n}}\in L^{1,\infty}_{W}$, by a standard diagonalization argument, there exists a function $\psi_{1}^{*}\in L^{1,\infty}_{W}$ such that $\psi_{1}^{v_{2,n}}(i)\rightarrow \psi_{1}^{*}(i)$ as $n\to\infty$ for all $i\in S$. Also, $\{	\rho_{1}^{v_{2,n}}\}$ is a bounded sequence. Hence, along a suitable subsequence (without loss of generality denoting by the same notation)  $\rho_{1}^{v_{2,n}}\rightarrow\rho_{1}^{*}$.
	Now from (\ref{eq 3.7}), for any $v_1\in V_1(i)$ we deduce that
		\begin{eqnarray*}
		\rho_{1}^{v_{2,n}}\psi_{1}^{v_{2,n}}(i)&\leq & \biggl[\sum_{j\in S}\psi_{1}^{v_{2,n}}(j)\pi_{ij}(v_1,v_{2,n}(i))+c_1(i,v_1,v_{2,n}(i))\psi_{1}^{v_{2,n}}(i)\biggr].
		\end{eqnarray*}
		This implies that
		\begin{align}\label{eq 3.8}
	\rho_{1}^{v_{2,n}}\psi_{1}^{v_{2,n}}(i)-\psi_{1}^{v_{2,n}}(i)\pi_{ii}(v_1,v_{2,n}(i))\leq  \biggl[\sum_{j\neq i}\psi_{1}^{v_{2,n}}(j)\pi_{ij}(v_1,v_{2,n}(i))+c_1(i,v_1,v_{2,n}(i))\psi_{1}^{v_{2,n}}(i)\biggr]. \nonumber \\
		\end{align}
	
	Note that	
			\begin{align}
	\sum_{j\neq i}\psi_{1}^{v_{2,n}}(j)\pi_{ij}(v_1,v_{2,n}(i))
	\leq \sum_{j\neq i}W(j)\pi_{ij}(v_1,v_{2,n}(i)).\label{eq 3.9}
	\end{align}
	Thus, using Lemma \ref{lemm 3.1}, generalized Fatou's lemma in \cite[Lemma~8.3.7]{HL} and taking $n\rightarrow\infty$ in (\ref{eq 3.8}), we get
	  \begin{align*}
	  \rho_{1}^{*}\psi_{1}^{*}(i)\leq \biggl[\sum_{j\in S}\psi_{1}^{*}(j)\pi_{ij}(v_1,\tilde{v}_{2}(i))+c_1(i,v_1,\tilde{v}_{2}(i))\psi_{1}^{*}(i)\biggr].
	  \end{align*}
	  Hence,   \begin{align}
	  \rho_{1}^{*}\psi_{1}^{*}(i)\leq \inf_{v_1\in V_1(i)}\biggl[\sum_{j\in S}\psi_{1}^{*}(j)\pi_{ij}(v_1,\tilde{v}_{2}(i))+c_1(i,v_1,\tilde{v}_{2}(i))\psi_{1}^{*}(i)\biggr].\label{eq 3.10}
	  \end{align}
	  Since $V_1(i)$ is compact, there exist $v_{1,n}^{*}, v_1^{*}\in \mathcal{S}_1$ such that $v_{1,n}^{*}\rightarrow v_1^{*}$ satisfying
	  \begin{align}
	  \rho_{1}^{v_{2,n}}\psi_{1}^{v_{2,n}}(i)=\biggl[\sum_{j\in S}\psi_{1}^{v_{2,n}}(j)\pi_{ij}(v_{1,n}^{*}(i),v_{2,n}(i))+c_1(i,v_{1,n}^{*}(i),v_{2,n}(i))\psi_{1}^{v_{2,n}}(i)\biggr].\label{eq 3.11}
	  \end{align}
	  Now, using Lemma \ref{lemm 3.1}, the dominated convergent theorem and passing  $n\rightarrow\infty$ in (\ref{eq 3.11}), we obtain
	   \begin{align*}
	  \rho_{1}^{*}\psi_{1}^{*}(i)=\biggl[\sum_{j\in S}\psi_{1}^{*}(j)\pi_{ij}(v_1^{*}(i),\tilde{v}_{2}(i))+c_1(i,v_1^{*}(i),\tilde{v}_{2}(i))\psi_{1}^{*}(i)\biggr],
	  \end{align*}
	  Therefore
	   \begin{align}
	  \rho_{1}^{*}\psi_{1}^{*}(i)\geq \inf_{v_1\in V_1(i)}\biggl[\sum_{j\in S}\psi_{1}^{*}(j)\pi_{ij}(v_1,\tilde{v}_{2}(i))+c_1(i,v_1,\tilde{v}_{2}(i))\psi_{1}^{*}(i)\biggr].\label{eq 3.12}
	  \end{align}
	  Hence, from (\ref{eq 3.10}), and (\ref{eq 3.12}), it follows that
	  \begin{align}
	  \rho_{1}^{*}\psi_{1}^{*}(i)=\inf_{v_1\in V_1(i)}\biggl[\sum_{j\in S}\psi_{1}^{*}(j)\pi_{ij}(v_1,\tilde{v}_{2}(i))+c_1(i,v_1,\tilde{v}_{2}(i))\psi_{1}^{*}(i)\biggr].\label{eq 3.13}
	  \end{align}
	  Since $\rho_{1}^{\tilde{v}_{2}}$ is the minimal eigenvalue corresponding to $\tilde{v}_{2}$ of  (\ref{eq 3.13}), we have $ \rho_{1}^{*}\geq \rho_{1}^{\tilde{v}_{2}}$. Suppose $\rho_{1}^{*} > \rho_{1}^{\tilde{v}_{2}}$. Now, from Theorem~\ref{theo 3.1}, for any minimizing  $\hat{v}_1\in {\mathcal{S}}_1$ of (\ref{eq 3.1}),  there exists a finite set $\mathscr{B}_1\supset \mathscr{K}$, such that
			\begin{align}
			\psi_{1}(i)=E^{\hat{v}_1,\tilde{v}_2}_i\bigg[e^{\int_{0}^{\hat{\tau}(\mathscr{B}_1)}(c_1(Y(t),\hat{v}_1(Y(t)),\tilde{v}_2(Y(t)))-\rho_{1}^{\tilde{v}_{2}})dt}\psi_{1}(Y({\hat{\tau}(\mathscr{B}_1)}))\bigg]~\forall i\in \mathscr{B}_1^c,\label{EL3.2A}
			\end{align}
			where $\hat{\tau}(\mathscr{B}_1)=\inf\{t:Y(t)\in \mathscr{B}_1\}=:\tilde{\tau}_1$. Since $\rho_{1}^{*} > \rho_{1}^{\tilde{v}_{2}}$, by similar arguments as in \cite[Lemma~3.4]{BP} we deduce that
			\begin{equation}\label{EL3.2B}
			\psi_{1}^{*}(i)\leq E^{\hat{v}_1,\tilde{v}_2}_i\bigg[e^{\int_{0}^{\hat{\tau}(\mathscr{B}_1)}(c_1(Y(t),\hat{v}_1(Y(t)),\tilde{v}_2(Y(t)))-\rho_{1}^{\tilde{v}_2})dt}\psi_{1}^{*}(Y({\hat{\tau}(\mathscr{B}_1)}))\bigg]~\forall i\in \mathscr{B}_1^c.
			\end{equation} From (\ref{EL3.2A}) and (\ref{EL3.2B}), we obtain
			\begin{equation}\label{EL3.2C}
			(\psi_{1}-\psi_{1}^{*})(i)\geq E^{\hat{v}_1,\tilde{v}_2}_i\bigg[e^{\int_{0}^{\hat{\tau}(\mathscr{B}_1)}(c_1(Y(t),\hat{v}_1(Y(t)),\tilde{v}_2(Y(t)))-\rho_{1}^{\tilde{v}_2})dt}(\psi_{1}-\psi_{1}^{*})(Y({\hat{\tau}(\mathscr{B}_1)}))\bigg]~\forall i\in \mathscr{B}_1^c.
			\end{equation}
Now choosing an appropriate constant $\theta$ (e.g., $\theta = \max_{\mathscr{B}_1}\frac{\psi_{1}}{\psi_{1}^*}$), we have $(\psi_{1}-\theta\psi_{1}^{*}) \geq 0$ in $\mathscr{B}_1$ and for some $\hat{i}_0\in \mathscr{B}_1,$ \,$(\psi_{1}-\theta\psi_{1}^{*})(\hat{i}_0) = 0$. Thus, in view of (\ref{EL3.2C}), we get $(\psi_{1}-\theta\psi_{1}^{*}) \geq 0$ in $S$. Now combining (\ref{eq 3.1}) and (\ref{eq 3.13}), we get
     \begin{align*}
	  \rho_{1}^{\tilde{v}_{2}}(\psi_{1} - \theta\psi_{1}^{*})(\hat{i}_0) \geq \biggl[\sum_{j\in S}(\psi_{1} - \theta\psi_{1}^{*})(j)\pi_{\hat{i}_0 j}(\hat{v}_1(\hat{i}_0),\tilde{v}_{2}(\hat{i}_0)) + c_1(\hat{i}_0,\hat{v}_1(\hat{i}_0),\tilde{v}_{2}(\hat{i}_0))(\psi_{1} - \theta\psi_{1}^{*})(\hat{i}_0)\biggr].	
	  \end{align*} This implies that
	  \begin{equation}\label{EL3.2D}
	  \sum_{j\neq \hat{i}_0}(\psi_{1} - \theta\psi_{1}^{*})(j)\pi_{\hat{i}_0 j}(\hat{v}_1(\hat{i}_0),\tilde{v}_{2}(\hat{i}_0)) = 0\,.
	  \end{equation}
Since, $\{Y(t)\}_{t\geq 0}$ is irreducible under $(\hat{v}_1,\tilde{v}_{2})\in \mathcal{S}_1 \times \mathcal{S}_2$, from (\ref{EL3.2D}) it follows that $\psi_{1} \equiv \theta\psi_{1}^{*}$. But this is a contradiction to the fact that $\rho_{1}^{*} > \rho_{1}^{\tilde{v}_{2}}$. Hence, we deduce that $\rho_{1}^{*} = \rho_{1}^{\tilde{v}_{2}}$.
	  This proves the continuty of the map $\hat{v}_2\rightarrow\rho_{1}^{\hat{v}_2}$. Since $\psi_{1}^{\hat{v}_{2,n}}(i_0)=1$ for all $n\geq 1$, we have $\psi_{1}^{*}(i_0)=1$. Hence by Theorem \ref{theo 3.1}, we have $\psi_{1}^{*}$ is the unique solution of (\ref{eq 3.1}). Thus $\psi_{1}^{*}=\psi_1^{\tilde{v}_{2}}$. This proves the continuity of the map $\hat{v}_2\rightarrow\psi
	  _{1}^{\hat{v}_2}$. Continuity of  other maps follows by the similar argument.
	\end{proof}

	 Fix $\hat{v}_2\in {\mathcal{S}}_2$. For each $i\in S$, $v_1\in V_1(i)$, set
\begin{align*}
\tilde{F}_1(i,v_1,\hat{v}_2(i))= \bigg[\sum_{j\in S}{\psi}_{1}^{\hat{v}_2}(j)\pi_{ij}(v_1,\hat{v}_2(i))+c_1(i,v_1,\hat{v}_2(i)){\psi}_{1}^{\hat{v}_2}(i)\bigg],
\end{align*}
where $\psi_{1}^{\hat{v}_2}$ is the solution of (\ref{eq 3.1}) corresponding to the strategy $\hat{v}_2\in {\mathcal{S}_2}$.
Let
\begin{align*}
\tilde{H}(\hat{v}_2)=\biggl\{\hat{v}^{*}_1\in {\mathcal{S}_1}:\tilde{F}_1(i,\hat{v}^{*}_1(i),\hat{v}_2(i))=\inf_{v_1\in V_1(i)} \tilde{F}_1(i,v_1,\hat{v}_2(i))~\forall~i\in S\biggr\}.
\end{align*}
Then by the compactness of each $V_1(i)$, it follows that $\tilde{H}(\hat{v}_2)$ is a non empty subset of ${\mathcal{S}}_1$. It is obvious that, $\tilde{H}(\hat{v}_2)$ is convex and closed. Since ${\mathcal{S}}_1$ is compact, $\tilde{H}(\hat{v}_2)$ is also compact.
Similarly, for $i\in S$, $\hat{v}_1\in {\mathcal{S}}_1$, $v_2\in V_2(i)$, we set
\begin{align*}
\tilde{F}_2(i,\hat{v}_1(i),v_2)= \bigg[\sum_{j\in S}{\psi}_{2}^{\hat{v}_1}(j)\pi_{ij}(\hat{v}_1(i),v_2)+c_2(i,\hat{v}_1(i),v_2){\psi}_{2}^{\hat{v}_1}(i)\bigg],~i\in S,
\end{align*}
where $\psi_{2}^{\hat{v}_1}$ is the solution of  (\ref{eq 3.4}) corresponding to the strategy $\hat{v}_1\in {\mathcal{S}}_1$.
 Let
\begin{align*}
\tilde{H}(\hat{v}_1)=\biggl\{\hat{v}^{*}_2\in {\mathcal{S}_2}:\tilde{F}_2(i,\hat{v}_1(i),\hat{v}^{*}_2(i))=\inf_{v_2\in V_2(i)} \tilde{F}_2(i,\hat{v}_1(i),v_2)~\forall~i\in S\biggr\}.
\end{align*}
Then by analogous arguments, $	\tilde{H}(\hat{v}_1)$ is nonempty, convex and is a compact subset of $ {\mathcal{S}}_2$.
Next  set
\begin{align*}
\tilde{H}(\hat{v}_1,\hat{v}_2)=\tilde{H}(\hat{v}_2)\times \tilde{H}(\hat{v}_1).
\end{align*}
From the above argument it is clear that $\tilde{H}(\hat{v}_1,\hat{v}_2)$ is nonempty, convex, and is a compact subset of ${\mathcal{S}}_1\times {\mathcal{S}}_2$. Therefore we may define a map from ${\mathcal{S}}_1\times{\mathcal{S}}_2\rightarrow 2^{{\mathcal{S}}_1}\times 2^{ {\mathcal{S}}_2}$.
\subsection{The existence of Nash equilibria}
 Next lemma proves upper-semicontinuity of certain set valued map. This result will be useful in establishing existence of a Nash equilibrium in the space of stationary Markov strategies.
\begin{lemma}\label{lemm 4.1}
	Let Assumptions \ref{assm 3.1}, \ref{assm 3.2}, and \ref{assm 3.3} hold. Then the map $(\hat{v}_1,\hat{v}_2)\rightarrow\tilde{H}(\hat{v}_1,\hat{v}_2)$ from ${\mathcal{S}}_1\times {\mathcal{S}}_2\rightarrow 2^{{\mathcal{S}}_1}\times 2^{ {\mathcal{S}}_2}$ is upper semicontinuous.
\end{lemma}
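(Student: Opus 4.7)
The plan is to verify upper semicontinuity via the sequential criterion. Take $(\hat{v}_{1,n},\hat{v}_{2,n}) \to (\hat{v}_1,\hat{v}_2)$ in $\mathcal{S}_1\times\mathcal{S}_2$ and pick $(\tilde{v}_{1,n},\tilde{v}_{2,n}) \in \tilde{H}(\hat{v}_{1,n},\hat{v}_{2,n})$. Because $\mathcal{S}_1\times\mathcal{S}_2$ is compact in the product topology, a subsequence (not relabelled) satisfies $(\tilde{v}_{1,n},\tilde{v}_{2,n}) \to (\tilde{v}_1,\tilde{v}_2)$, and it suffices to show $\tilde{v}_1\in\tilde{H}(\hat{v}_2)$; the argument for $\tilde{v}_2\in\tilde{H}(\hat{v}_1)$ is identical by symmetry.

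By Lemma~\ref{lemm 3.4}, $\psi_1^{\hat{v}_{2,n}} \to \psi_1^{\hat{v}_2}$ pointwise in $S$ with the uniform bound $\psi_1^{\hat{v}_{2,n}}\le W$. Denote by $\tilde{F}_1^{(n)}(i,v_1,v_2) := \sum_{j}\psi_1^{\hat{v}_{2,n}}(j)\pi_{ij}(v_1,v_2)+c_1(i,v_1,v_2)\psi_1^{\hat{v}_{2,n}}(i)$, so that
\[
\tilde{F}_1^{(n)}(i,\tilde{v}_{1,n}(i),\hat{v}_{2,n}(i)) = \inf_{v_1\in V_1(i)}\tilde{F}_1^{(n)}(i,v_1,\hat{v}_{2,n}(i)).
\]
The main step is to pass to the limit on both sides. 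For any fixed $v_1 \in V_1(i)$, using the domination $\psi_1^{\hat{v}_{2,n}}(j)\le W(j)$, the continuity of $(u_1,u_2)\mapsto\sum_j W(j)\bar{\pi}_{ij}(u_1,u_2)$ from Assumption~\ref{assm 3.3}(ii), and the generalized Fatou/dominated convergence machinery used in the proof of Lemma~\ref{lemm 3.4} (appealing to \cite[Lemma~8.3.7]{HL}), one obtains $\tilde{F}_1^{(n)}(i,v_1,\hat{v}_{2,n}(i)) \to \tilde{F}_1(i,v_1,\hat{v}_2(i))$, and with $v_1$ replaced by $\tilde{v}_{1,n}(i)\to\tilde{v}_1(i)$ the same argument, combined with joint continuity in $(v_1,v_2)$ from Lemma~\ref{lemm 3.1}, yields
\[
\tilde{F}_1^{(n)}(i,\tilde{v}_{1,n}(i),\hat{v}_{2,n}(i)) \longrightarrow \tilde{F}_1(i,\tilde{v}_1(i),\hat{v}_2(i)).
\]

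To transfer the infimum, note that $\limsup_n \inf_{v_1}\tilde{F}_1^{(n)}(i,v_1,\hat{v}_{2,n}(i))\le \tilde{F}_1(i,v_1,\hat{v}_2(i))$ for each fixed $v_1$ (by the pointwise convergence just established), so the $\limsup$ is at most $\inf_{v_1}\tilde{F}_1(i,v_1,\hat{v}_2(i))$. Conversely, by compactness of $V_1(i)$ pick minimizers $v_{1,n}^*$ of $\tilde{F}_1^{(n)}(i,\cdot,\hat{v}_{2,n}(i))$ converging along a subsequence to some $v_1^{\sharp}\in V_1(i)$; the same dominated-convergence argument gives $\liminf_n \inf_{v_1}\tilde{F}_1^{(n)}(i,v_1,\hat{v}_{2,n}(i)) = \lim_n\tilde{F}_1^{(n)}(i,v_{1,n}^*,\hat{v}_{2,n}(i)) = \tilde{F}_1(i,v_1^{\sharp},\hat{v}_2(i))\ge \inf_{v_1}\tilde{F}_1(i,v_1,\hat{v}_2(i))$. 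Equating this with the limit of the left-hand side gives $\tilde{F}_1(i,\tilde{v}_1(i),\hat{v}_2(i)) = \inf_{v_1\in V_1(i)}\tilde{F}_1(i,v_1,\hat{v}_2(i))$ for every $i\in S$, i.e.\ $\tilde{v}_1\in\tilde{H}(\hat{v}_2)$.

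The main obstacle is the \emph{simultaneous} convergence of two varying quantities inside $\tilde{F}_1^{(n)}$: the eigenfunction $\psi_1^{\hat{v}_{2,n}}$ changes with $n$, and so do the actions. A naive dominated convergence does not apply to $\sum_j \psi_1^{\hat{v}_{2,n}}(j)\pi_{ij}(v_{1,n}^*,\hat{v}_{2,n}(i))$. What saves the day is the uniform envelope $\psi_1^{\hat{v}_{2,n}}\le W$ together with Assumption~\ref{assm 3.3}(ii), which gives uniform tail control via $\sum_j W(j)\bar{\pi}_{ij}(u_1,u_2)$; this is precisely the ingredient already exploited in Lemma~\ref{lemm 3.4}, and its reuse here is what makes the passage to the limit rigorous.
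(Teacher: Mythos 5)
Your proposal is correct and follows essentially the same route as the paper: the sequential criterion for upper semicontinuity, compactness of $\mathcal{S}_1\times\mathcal{S}_2$ to extract convergent selections, the continuity $\psi_1^{\hat{v}_{2,n}}\to\psi_1^{\hat{v}_2}$ from Lemma~\ref{lemm 3.4}, and the $W$-envelope with Assumption~\ref{assm 3.3}(ii) and the generalized dominated convergence of \cite[Lemma~8.3.7]{HL} to pass to the limit in $\tilde{F}_1^{(n)}$. The only cosmetic difference is how the infimum is transferred: the paper simply compares $\tilde{F}_1^{(n)}$ at the minimizer with its value at an arbitrary fixed $\tilde{v}_1\in\mathcal{S}_1$ and lets $n\to\infty$, whereas you run a two-sided $\limsup$/$\liminf$ argument with minimizing selectors (whose $\liminf$ half is in fact redundant, since $\tilde{v}_1(i)\in V_1(i)$ already gives the trivial inequality).
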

\begin{proof}
	Let $\{(v_1^m,v_2^m)\}\in {\mathcal{S}}_1\times {\mathcal{S}}_2$ and $(v_1^m,v_2^m)\rightarrow (\hat{v}_1,\hat{v}_2)$ in ${\mathcal{S}}_1\times {\mathcal{S}}_2$, i.e., for each $i \in S, \, (v_1^m(i),v_2^m(i)) \to (\hat{v_1}(i),\hat{v}_2(i)) \, \, \mbox{in} \, \, V_1(i)\times V_2(i) $. Let $\overline{v}_1^m\in \tilde{H}({v}_2^m)$. Then $\{\overline{v}_1^m\}\subset {\mathcal{S}}_1$. Since ${\mathcal{S}}_1$ is compact, it has a convergent subsequence (denoted by the same sequence by an abuse of notation), such that
	\begin{align*}
\overline{v}_1^m\rightarrow\overline{v}_1 ~\text{in}~ {\mathcal{S}}_1.
	\end{align*}
	Then $(\overline{v}_1^m,v_2^m)\rightarrow (\overline{v}_1,\hat{v}_2)$ in ${\mathcal{S}}_1\times {\mathcal{S}}_2$.
	Note that
	\begin{align*}
	\sum_{j\neq i}{\pi}_{ij}(\overline{v}_1^m,v_2^{m}(i)){\psi}^{{v}_2^{m}}_1(j) \leq \sum_{j\neq i}{\pi}_{ij}(\overline{v}_1^m,v_2^{m}(i))W(j).
	\end{align*}
	Thus from [\cite{HL}, Lemma~8.3.7],  Assumption \ref{assm 3.3} and the (product) topology of ${\mathcal{S}}_k$, $k=1,2$, it follows that for each $i\in S$,
	$$\sum_{j\in S}{\pi}_{ij}(\overline{v}_1^m,v_2^m(i)){\psi}^{v_2^m}_{1}(j)+ c_1(i,\overline{v}_1^m,v_2^m(i)){\psi}^{{v}^m_2}_{1}(i)$$ converges  to
	$$\sum_{j\in S}{\pi}_{ij}(\overline{v}_1,\hat{v}_2(i)){\psi}^{\hat{v}_2}_1(j)+ c_1(i,\overline{v}_1,\hat{v}_2(i)){\psi}^{\hat{v}_2}_1(i).$$
	 Hence we have
	\begin{align}
	\lim_{m\rightarrow \infty}\tilde{F}_1(i,\overline{v}_1^m(i),v^m_2(i))=\tilde{F}_1(i,\overline{v}_1(i),\hat{v}_2(i)).\label{eq 4.1}
	\end{align}
Now fix $\tilde{v}_1\in {\mathcal{S}}_1$ and consider the sequence $(\tilde{v}_1,v_2^m)$. Using the analogous arguments as above, we conclude that
\begin{align}
\lim_{m\rightarrow \infty}\tilde{F}_1(i,\tilde{v}_1(i),v^m_2(i))=\tilde{F}_1(i,\tilde{v}_1(i),\hat{v}_2(i)).\label{eq 4.2}
\end{align}
Since $\overline{v}_1^m\in H(v_2^m)$, for any $m$ we have
\begin{align*}
\tilde{F}_1(i,\tilde{v}_1(i),v^m_2(i))\geq \tilde{F}_1(i,\overline{v}_1^m(i),v^m_2(i)).
\end{align*}
	Thus, in view of (\ref{eq 4.1}) and (\ref{eq 4.2}), taking $m\rightarrow\infty$ in the above equation, for any $\tilde{v}_1\in {\mathcal{S}}_1$ we get
	\begin{align*}
\tilde{F}_1(i,\tilde{v}_1(i),\hat{v}_2(i))\geq \tilde{F}_1(i,\overline{v}_1(i),\hat{v}_2(i)).
	\end{align*}
	Therefore, $\overline{v}_1\in \tilde{H}(\hat{v}_2)$. Suppose $\overline{v}_2^m\in \tilde{H}(v_1^m) $ and along a subsequence $\overline{v}_2^m\rightarrow\overline{v}_2$ in $ {\mathcal{S}}_2$. Then, by the similar arguments as above one can show that $\overline{v}_2\in \tilde{H}(\hat{v}_1)$. This proves that the map $(\hat{v}_1,\hat{v}_2)\rightarrow\tilde{H}(\hat{v}_1,\hat{v}_2)$ is upper-semicontinuous.
\end{proof}
\begin{thm}\label{theo 4.1}
	 Grant Assumptions \ref{assm 3.1}, \ref{assm 3.2}, and \ref{assm 3.3}. Then there exists a Nash equilibrium in the space of stationary Markov strategies ${\mathcal{S}}_1\times {\mathcal{S}}_2.$
	 \begin{proof}
	 	By Lemma \ref{lemm 4.1} and Fan's fixed point theorem \cite{F1}, there exists a fixed point $(\hat{v}^{*}_1,\hat{v}^{*}_2)\in{\mathcal{S}}_1\times {\mathcal{S}}_2$, for the map $(\hat{v}_1,\hat{v}_2)\rightarrow\tilde{H}(\hat{v}_1,\hat{v}_2)$ from ${\mathcal{S}}_1\times {\mathcal{S}}_2\rightarrow 2^{{\mathcal{S}}_1}\times 2^{ {\mathcal{S}}_2}$, i.e.,
	 	\begin{align*}
	(\hat{v}^{*}_1,\hat{v}^{*}_2)\in \tilde{H}(\hat{v}^{*}_1,\hat{v}^{*}_2).
	 	\end{align*}
	 	This implies that $(\rho_{1}^{\hat{v}^{*}_2},\psi_{1}^{\hat{v}^{*}_2})$, $(\rho_{2}^{\hat{v}^{*}_1},\psi_{2}^{\hat{v}^{*}_1})$ satisfy the following coupled HJB equations:
	 	\begin{align}
	 \left\{ \begin{array}{lll}\rho_{1}^{\hat{v}^{*}_2}\psi_{1}^{\hat{v}^{*}_2}(i)&=\displaystyle { \inf_{v_1\in V_1(i)}\biggl[\sum_{j\in S}{\pi}_{ij}(v_1,\hat{v}^{*}_2(i)){\psi}^{\hat{v}^{*}_2}_1(j)+ c_1(i,v_1,\hat{v}^{*}_2(i)){\psi}^{\hat{v}^{*}_2}_1(i)\biggr]}\\
	 &=\biggl[\sum_{j\in S}{\pi}_{ij}(\hat{v}^{*}_1(i),\hat{v}^{*}_2(i)){\psi}^{\hat{v}^{*}_2}_{1}(j)+ c_1(i,\hat{v}^{*}_1(i),\hat{v}^{*}_2(i)){\psi}^{\hat{v}^{*}_2}_1(i)\biggr],\\\psi_{1}^{\hat{v}^{*}_2}(i_0)=1\label{eq 4.3}
	 \end{array}\right.
	 \end{align}
	 and
		\begin{align}
	\left\{ \begin{array}{lll}\rho_{2}^{\hat{v}^{*}_1}\psi_{2}^{\hat{v}^{*}_1}(i)&=\displaystyle { \inf_{v_2\in V_2(i)}\biggl[\sum_{j\in S}{\pi}_{ij}(\hat{v}^{*}_1(i),v_2){\psi}^{\hat{v}^{*}_1}_2(j)+ c_2(i,\hat{v}^{*}_1(i),v_2){\psi}^{\hat{v}^{*}_1}_2(i)\biggr]}\\
	&=\biggl[\sum_{j\in S}{\pi}_{ij}(\hat{v}^{*}_1(i),\hat{v}^{*}_2(i)){\psi}^{\hat{v}^{*}_1}_2(j)+ c_2(i,\hat{v}^{*}_1(i),\hat{v}^{*}_2(i)){\psi}^{\hat{v}^{*}_1}_2(i)\biggr],\\\psi_{2}^{\hat{v}^{*}_2}(i_0)=1.\label{eq 4.4}
	\end{array}\right.
	\end{align}
	 	Now by Theorem \ref{theo 3.1}, from (\ref{eq 4.3}), it follows that
	 	\begin{align}
	 	\rho_1^{\hat{v}^{*}_2}=&\inf_{{v}_1\in\mathscr{A}_1}\rho_1^{{v}_1,\hat{v}^{*}_2}\nonumber\\
	 	&=\rho_1^{\hat{v}^{*}_1,\hat{v}^{*}_2}.\label{eq 4.5}
	 	\end{align}
	 	Similarly, from (\ref{eq 4.4}), we have
	 		\begin{align}
	 	\rho^{\hat{v}^{*}_1}_{2}=&\inf_{{v}_2\in\mathscr{A}_2}\rho_2^{\hat{v}^{*}_1,{v}_2}\nonumber\\
	 	&=\rho_2^{\hat{v}^{*}_1,\hat{v}^{*}_2}\label{eq 4.6}.
	 	\end{align}
	 	
	 	Thus, from equations (\ref{eq 4.5}) and (\ref{eq 4.6}), we get
	 		\begin{align*}
\rho_1^{{v}_1,\hat{v}^{*}_2}\geq \rho_1^{\hat{v}^{*}_1,\hat{v}^{*}_2},~\forall~{v}_1\in \mathscr{A}_1,
	 	\end{align*}
	 	\begin{align*}
\rho_2^{\hat{v}^{*}_1,{v}_2}\geq \rho_2^{\hat{v}^{*}_1,\hat{v}^{*}_2}, ~\forall~{v}_2\in \mathscr{A}_2.
	 	\end{align*}
	 	Hence $(\hat{v}^{*}_1,\hat{v}^{*}_2)\in {\mathcal{S}}_1\times {\mathcal{S}}_2$ is a Nash equilibrium. This completes the proof.
	 	\end{proof}
\end{thm}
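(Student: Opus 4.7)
My plan is to apply Fan's fixed point theorem to the set-valued map $(\hat{v}_1,\hat{v}_2)\mapsto \tilde{H}(\hat{v}_1,\hat{v}_2)$ defined on $\mathcal{S}_1\times\mathcal{S}_2$. First I would verify the hypotheses: the domain $\mathcal{S}_1\times\mathcal{S}_2$ is a compact convex subset of a locally convex topological vector space, since for each $i\in S$ the set $V_k(i)=\mathcal{P}(U_k(i))$ is compact and convex (Prohorov topology on probability measures over a compact Borel set), and $\mathcal{S}_k$ carries the product topology. The discussion preceding Lemma \ref{lemm 4.1} already records that the values $\tilde{H}(\hat{v}_1,\hat{v}_2)=\tilde{H}(\hat{v}_2)\times\tilde{H}(\hat{v}_1)$ are nonempty, convex, and closed (hence compact) subsets of $\mathcal{S}_1\times\mathcal{S}_2$, using the compactness of each $V_k(i)$ and linearity of the Hamiltonians in the relaxed action variables. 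Upper semi-continuity is exactly the content of Lemma \ref{lemm 4.1}.

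Fan's theorem then yields a fixed point $(\hat{v}_1^*,\hat{v}_2^*)\in \tilde{H}(\hat{v}_1^*,\hat{v}_2^*)$. Unpacking the definition of $\tilde{H}$, this means $\hat{v}_1^*$ is a minimizing selector of the HJB equation \eqref{eq 3.1} associated with $\hat{v}_2 = \hat{v}_2^*$, and symmetrically $\hat{v}_2^*$ is a minimizing selector of \eqref{eq 3.4} associated with $\hat{v}_1=\hat{v}_1^*$. Writing this out gives the coupled HJB system with eigenpairs $(\rho_1^{\hat{v}_2^*},\psi_1^{\hat{v}_2^*})$ and $(\rho_2^{\hat{v}_1^*},\psi_2^{\hat{v}_1^*})$, exactly of the form announced in the statement of the theorem.

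The crucial step is then to convert this into a Nash inequality on the ergodic cost. For player 1, Theorem \ref{theo 3.1}\eqref{eq 3.3} asserts
\begin{equation*}
\rho_1^{\hat{v}_2^*} \;=\; \inf_{v_1\in\mathcal{A}_1}\rho_1^{v_1,\hat{v}_2^*},
\end{equation*}
and by the minimizing selector property together with the verification that a selector of the HJB equation achieves the infimum (this is the direction of Theorem \ref{theo 3.1} encoded in the stochastic representation \eqref{eq 3.2}; indeed, since $\hat{v}_1^*$ realizes the Hamiltonian pointwise, an It\^o--Dynkin argument identical to the one used for $v_1^*$ in the proof of Theorem \ref{theo 3.1} gives $\rho_1^{\hat{v}_1^*,\hat{v}_2^*}=\rho_1^{\hat{v}_2^*}$). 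Combining, $\rho_1^{\hat{v}_1^*,\hat{v}_2^*}\leq \rho_1^{v_1,\hat{v}_2^*}$ for every $v_1\in\mathcal{A}_1$. The analogous reasoning for player 2 using \eqref{eq 3.6} yields $\rho_2^{\hat{v}_1^*,\hat{v}_2^*}\leq \rho_2^{\hat{v}_1^*,v_2}$ for all $v_2\in\mathcal{A}_2$, and this is the Nash equilibrium property.

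The main conceptual obstacle is not the fixed point step itself (which is a direct invocation of Fan's theorem once Lemma \ref{lemm 4.1} is in hand), but rather the implicit identification that a pointwise minimizing selector of the HJB equation produces the infimum of the ergodic cost over \emph{all} admissible (history-dependent) strategies, not just stationary ones. This delicate point is already subsumed in Theorem \ref{theo 3.1}, whose stochastic representation \eqref{eq 3.2} (derived via the truncated eigenvalue problem and an exit-time martingale argument) ensures that the minimal eigenvalue $\rho_1^{\hat{v}_2^*}$ coincides with the infimal ergodic cost and is attained by any measurable selector that minimizes the Hamiltonian. Once this is granted, the proof collapses to the three-step routine above.
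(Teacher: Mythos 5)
Your proposal follows essentially the same route as the paper: Fan's fixed point theorem applied to the set-valued map $\tilde{H}$ (nonempty, convex, compact values; upper semicontinuity from Lemma \ref{lemm 4.1}), then Theorem \ref{theo 3.1} to identify $\rho_1^{\hat{v}_2^*}$ and $\rho_2^{\hat{v}_1^*}$ with the infimal ergodic costs attained by the minimizing selectors, yielding the Nash inequalities. Your added remark that the selector's optimality over all admissible strategies rests on the stochastic representation in Theorem \ref{theo 3.1} is a correct and slightly more explicit account of the verification step the paper invokes implicitly.
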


Now we prove a converse of Theorem \ref{theo 4.1}.
\begin{thm}\label{theo 4.2}
  Let Assumptions \ref{assm 3.1}, \ref{assm 3.2}, and \ref{assm 3.3} hold. If $(\underline{v}_1^*,\underline{v}_2^*)\in \mathcal{S}_1\times\mathcal{S}_2$ is a Nash equilibrium,
i.e.,
	\begin{align*}
\rho_1^{{v}_1,\underline{v}_2^*}\geq \rho_1^{\underline{v}_1^*,\underline{v}_2^*},~\forall~{v}_1\in \mathscr{A}_1,
\end{align*}
\begin{align*}
\rho_2^{\underline{v}_1^*,{v}_2}\geq \rho_2^{\underline{v}_1^*,\underline{v}_2^*}, ~\forall~{v}_2\in \mathscr{A}_2.
\end{align*}
Then $\underline{v}_1^*\in \mathcal{S}_1$ is a minimizing selector of (\ref{eq 3.1}) (corresponding to fixed strategy $\underline{v}_2^*\in\mathcal{S}_2$ of player 2) and $\underline{v}_2^*\in \mathcal{S}_2$ is a minimizing selector of (\ref{eq 3.4}) (corresponding to fixed strategy $\underline{v}_1^*\in\mathcal{S}_1$ of player 1).
\end{thm}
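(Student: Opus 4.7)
The plan is to show that $\underline{v}_1^*$ saturates the HJB (\ref{eq 3.1}) with $\hat v_2=\underline{v}_2^*$ pointwise; the statement for $\underline{v}_2^*$ follows by a symmetric argument. First, taking the infimum over $v_1\in\mathscr{A}_1$ in the Nash inequality $\rho_1^{v_1,\underline{v}_2^*}\ge \rho_1^{\underline{v}_1^*,\underline{v}_2^*}$ and using (\ref{eq 3.3}) of Theorem~\ref{theo 3.1} (together with the trivial reverse inequality obtained from $\underline{v}_1^*\in\mathscr{A}_1$) forces $\rho_1^{\underline{v}_1^*,\underline{v}_2^*}=\rho_1$, where $\rho_1$ is the principal eigenvalue of (\ref{eq 3.1}). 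Let $\psi_1$ be the associated minimal positive eigenfunction and set
\[
G(i):=\sum_{j\in S}\pi_{ij}(\underline{v}_1^*(i),\underline{v}_2^*(i))\psi_1(j)+c_1(i,\underline{v}_1^*(i),\underline{v}_2^*(i))\psi_1(i)-\rho_1\psi_1(i).
\]
By (\ref{eq 3.1}), $G\ge 0$ on $S$, and the desired conclusion that $\underline{v}_1^*$ is a minimizing selector is equivalent to $G\equiv 0$.

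To produce a comparison function, I would apply Theorem~\ref{theo 3.1} (or the MDP results of \cite{BP}) in the degenerate case where the admissible action sets are the singletons $\{\underline{v}_1^*(i)\}$ and $\{\underline{v}_2^*(i)\}$; the hypotheses \ref{assm 3.2}--\ref{assm 3.3} remain in force. This yields a minimal positive eigenpair $(\rho_1,\tilde\psi)\in\mathbb{R}_+\times L^{1,\infty}_W$ with $\tilde\psi(i_0)=1$ of the linear single-strategy equation, together with the exact Feynman--Kac representation
\[
\tilde\psi(i)=E_i^{\underline{v}_1^*,\underline{v}_2^*}\Bigl[e^{\int_0^{\hat\tau(\mathscr{B}_1)}(c_1-\rho_1)ds}\tilde\psi\bigl(Y(\hat\tau(\mathscr{B}_1))\bigr)\Bigr]\quad\text{for }i\in\mathscr{B}_1^c,
\]
where $\mathscr{B}_1$ is taken common to the two eigenvalue problems. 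In parallel, applying the It\^o--Dynkin formula to $e^{\int_0^t(c_1-\rho_1)ds}\psi_1(Y(t))$ under $(\underline{v}_1^*,\underline{v}_2^*)$ and passing to the limit by the truncation scheme of (\ref{ET3.1BA2})--(\ref{ET3.1BA3}) using the Lyapunov estimates (\ref{ELY1A})/(\ref{ELY1B}) (noting $\psi_1\le W$), the bound $G\ge 0$ gives the submartingale inequality
\[
\psi_1(i)\le E_i^{\underline{v}_1^*,\underline{v}_2^*}\Bigl[e^{\int_0^{\hat\tau(\mathscr{B}_1)}(c_1-\rho_1)ds}\psi_1\bigl(Y(\hat\tau(\mathscr{B}_1))\bigr)\Bigr]\quad\text{for }i\in\mathscr{B}_1^c.
\]

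To close the argument, I would set $\kappa:=\max_{j\in\mathscr{B}_1}\psi_1(j)/\tilde\psi(j)<\infty$, so that $\psi_1\le\kappa\tilde\psi$ on $\mathscr{B}_1$ by definition. Combining the submartingale estimate with the exact representation for $\tilde\psi$ extends the bound to $\mathscr{B}_1^c$:
\[
\psi_1(i)\le\kappa\, E_i^{\underline{v}_1^*,\underline{v}_2^*}\Bigl[e^{\int_0^{\hat\tau}(c_1-\rho_1)ds}\tilde\psi(Y(\hat\tau))\Bigr]=\kappa\tilde\psi(i).
\]
Hence $\phi:=\kappa\tilde\psi-\psi_1\ge 0$ on $S$ with $\phi(i^*)=0$ for some $i^*\in\mathscr{B}_1$. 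Multiplying the eigen equation for $\tilde\psi$ by $\kappa$ and subtracting the HJB inequality $\rho_1\psi_1(i)\le\sum_j\pi_{ij}(\underline{v}_1^*,\underline{v}_2^*)\psi_1(j)+c_1(i,\underline{v}_1^*,\underline{v}_2^*)\psi_1(i)$ gives
\[
\rho_1\phi(i)\ge\sum_{j\in S}\pi_{ij}(\underline{v}_1^*,\underline{v}_2^*)\phi(j)+c_1(i,\underline{v}_1^*,\underline{v}_2^*)\phi(i)\quad\forall\,i\in S,
\]
and at $i=i^*$ this collapses to $0\ge\sum_{j\ne i^*}\pi_{i^*j}(\underline{v}_1^*,\underline{v}_2^*)\phi(j)\ge 0$. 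Non-negativity of each summand forces $\phi$ to vanish on every state directly reachable from $i^*$; iterating and invoking the irreducibility in Assumption~\ref{assm 3.2}, $\phi\equiv 0$ on $S$, so $\psi_1\equiv\kappa\tilde\psi$. Evaluation at $i_0$ gives $\kappa=1$, whence $\psi_1\equiv\tilde\psi$ and $G\equiv 0$. Thus $\underline{v}_1^*$ is a minimizing selector of (\ref{eq 3.1}); the symmetric argument, using the analog of Theorem~\ref{theo 3.1} applied with $\hat v_1=\underline{v}_1^*$, gives the corresponding statement for $\underline{v}_2^*$ and (\ref{eq 3.4}).

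The main technical obstacle is rigorously carrying out the two It\^o--Dynkin limits---the submartingale bound for $\psi_1$ and the exact Feynman--Kac formula for $\tilde\psi$---under possibly unbounded transition and cost rates. Both hinge on the Lyapunov truncation scheme already set up in the proof of Theorem~\ref{theo 3.1}, combined with the domination $\psi_1,\tilde\psi\le W$ and the estimates (\ref{ELY1A})/(\ref{ELY1B}), and require the bounded and unbounded cost cases of Assumption~\ref{assm 3.2} to be treated separately; once these identities are in hand, the strong maximum principle together with irreducibility completes the proof in a few lines.
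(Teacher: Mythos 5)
Your proposal is correct and takes essentially the same route as the paper: you equate $\rho_1^{\underline{v}_1^*,\underline{v}_2^*}$ with the principal eigenvalue $\rho_1^{\underline{v}_2^*}$ using the Nash inequality together with (\ref{eq 3.3}), compare the eigenfunction of the linear fixed-pair equation with the HJB eigenfunction through their stochastic representations up to $\hat{\tau}(\mathscr{B}_1)$, and then identify the two via the maximum-principle/irreducibility argument, which yields that $\underline{v}_1^*$ (resp. $\underline{v}_2^*$) is a minimizing selector of (\ref{eq 3.1}) (resp. (\ref{eq 3.4})). The only cosmetic differences are that the paper obtains the linear eigenpair by citing \cite{BP} rather than a singleton-action-set version of Theorem \ref{theo 3.1}, and it invokes the already established representation (\ref{eq 3.2}) (its inequality (\ref{eq 4.13})) where you re-derive the submartingale bound by the truncation scheme.
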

\begin{proof}
	 Applying analogous arguments as in [\cite{BP}, Lemma 3.4 and Remark 3.1], one can prove that for the given pair $(\underline{v}_1^*,\underline{v}_2^*)\in \mathcal{S}_1\times\mathcal{S}_2$, there exists a eigenpair
	 $(\rho_{1}^{\underline{v}_1^*,\underline{v}_2^*},\psi_{1}^{\underline{v}_1^*,\underline{v}_2^*})\in \mathbb{R}\times L^\infty_{W}$, $\psi_{1}^{\underline{v}_1^*}>0$ and $\rho_{1}^{\underline{v}_1^*,\underline{v}_2^*}\geq 0$ satisfying
	 	
		\begin{align}
	\left\{ \begin{array}{lll}&\rho_{1}^{\underline{v}_1^*,\underline{v}_2^*}\psi_{1}^{\underline{v}_1^*,\underline{v}_2^*}(i)=\sum_{j\in S}\pi_{ij}(\underline{v}_1^*(i),\underline{v}_2^*(i))\psi_{1}^{\underline{v}_1^*,\underline{v}_2^*}(j)+c_1(i,\underline{v}_1^*(i),\underline{v}_2^*(i))\psi_{1}^{\underline{v}_1^*,\underline{v}_2^*}(i),\\&\psi_{1}^{\underline{v}_1^*,\underline{v}_2^*}(i_0)=1.\label{eq 4.7}
	\end{array}\right.
	\end{align}
	Also, for given $\underline{v}_2^*\in \mathcal{S}_2$, there exists a minimal eigenpair $(\rho_{1}^{\underline{v}_2^*},\psi_{1}^{\underline{v}_2^*})\in \mathbb{R}_+\times L^\infty_{W}$, $\psi_{1}^{\underline{v}_2^*}>0$, satisfying
		\begin{align}
	\left\{ \begin{array}{lll}&\rho_{1}^{\underline{v}_2^*}\psi_{1}^{\underline{v}_2^*}(i)=\displaystyle { \inf_{v_1\in V_1(i)}\biggl[\sum_{j\in S}\pi_{ij}(v_1,\underline{v}_2^*(i))\psi_{1}^{\underline{v}_2^*}(j)+c_1(i,{v}_1,\underline{v}_2^*(i))\psi_{1}^{\underline{v}_2^*}(i)\biggr]},\\&\psi_{1}^{\underline{v}_2^*}(i_0)=1.\label{eq 4.8}
	\end{array}\right.
	\end{align}
 Since $\rho_{1}^{\underline{v}_2^*}$ is a minimal eigenvalue of (\ref{eq 4.8}), corresponding to $\underline{v}_2^*$, we have
	\begin{align}
\rho_{1}^{\underline{v}_2^*}=\inf_{{v}_1\in\mathscr{A}_1}\rho_{1}^{v_1,\underline{v}_2^*}.\label{eq 4.9}
	\end{align}
	Also, we have
	\begin{align*}
	\rho_1^{{v}_1,\underline{v}_2^*}\geq \rho_1^{\underline{v}_1^*,\underline{v}_2^*},~\forall~{v}_1\in \mathscr{A}_1.
	\end{align*}
	Hence,
		\begin{align}
	\inf_{{v}_1\in\mathscr{A}_1}\rho_1^{{v}_1,\underline{v}_2^*}\geq \rho_1^{\underline{v}_1^*,\underline{v}_2^*}.\label{eq 4.10}
	\end{align}
	So, by (\ref{eq 4.9}) and (\ref{eq 4.10}), we obtain
		\begin{align*}
	\rho^{\underline{v}_2^*}_1\geq \rho^{\underline{v}_1^*,\underline{v}_2^*}_1.
	\end{align*}
	Also, from (\ref{eq 4.9}), we have
	\begin{align*}
	\rho_{1}^{\underline{v}_2^*}\leq \rho_{1}^{\underline{v}_1^*,\underline{v}_2^*}.
	\end{align*}
		Hence, we deduce that
	\begin{align}
	\rho^{\underline{v}_2^*}_1=\rho^{\underline{v}_1^*,\underline{v}_2^*}_1.\label{eq 4.11}
	\end{align}

Now, applying Ito-Dynkin formula, from (\ref{eq 4.7}), it follows that
	\begin{align*}
\psi_{1}^{\underline{v}_1^*,\underline{v}_2^*}(i)=E^{\underline{v}_1^*,\underline{v}_2^*}_i\bigg[e^{\int_{0}^{T\wedge\hat{\tau}(\mathscr{B}_1)}(c_1(Y(t),\underline{v}_1^*(Y(t)),\underline{v}_2^*(Y(t)))-\rho_{1}^{\underline{v}_1^*,\underline{v}_2^*})dt}\psi_{1}^{\underline{v}_1^*,\underline{v}_2^*}(Y({T\wedge\hat{\tau}(\mathscr{B}_1)}))\bigg]~\forall i\in \mathscr{B}_1^c,
\end{align*}
where $\mathscr{B}_1$ is as in Theorem \ref{theo 3.1}.
Now, by Fatou's Lemma, taking $T\rightarrow\infty$ in the above equation, we get
	\begin{align}
\psi_{1}^{\underline{v}_1^*,\underline{v}_2^*}(i)\geq E^{\underline{v}_1^*,\underline{v}_2^*}_i\bigg[e^{\int_{0}^{\hat{\tau}(\mathscr{B}_1)}(c_1(Y(t),\underline{v}_1^*(Y(t)),\underline{v}_2^*(Y(t)))-\rho_{1}^{\underline{v}_1^*,\underline{v}_2^*})dt}\psi_{1}^{\underline{v}_1^*,\underline{v}_2^*}(Y({\hat{\tau}(\mathscr{B}_1)}))\bigg]~\forall i\in \mathscr{B}_1^c.\label{eq 4.12}
\end{align}
Again, using (\ref{eq 4.8}), from Theorem \ref{theo 3.1}, it follows that
	\begin{align}
\psi_{1}^{\underline{v}_2^*}(i)\leq E^{\underline{v}_1^*,\underline{v}_2^*}_i\bigg[e^{\int_{0}^{\hat{\tau}(\mathscr{B}_1)}(c_1(Y(t),\underline{v}_1^*(Y(t)),\underline{v}_2^*(Y(t)))-\rho_{1}^{\underline{v}_2^*})dt}\psi_{1}^{\underline{v}_2^*}(Y({\hat{\tau}(\mathscr{B}_1)}))\bigg]~\forall i\in \mathscr{B}_1^c.\label{eq 4.13}
\end{align}
So, by (\ref{eq 4.12}) and (\ref{eq 4.13}), we obtain
	\begin{align}
\psi_{1}^{\underline{v}_1^*,\underline{v}_2^*}(i)&-\psi_{1}^{\underline{v}_2^*}(i)\nonumber\\
&\geq E^{\underline{v}_1^*,\underline{v}_2^*}_i\bigg[e^{\int_{0}^{\hat{\tau}(\mathscr{B}_1)}(c_1(Y(t),\underline{v}_1^*(Y(t)),\underline{v}_2^*(Y(t)))-\rho_{1}^{\underline{v}_2^*})dt}(\psi_{1}^{\underline{v}_1^*,\underline{v}_2^*}-\psi_{1}^{\underline{v}_2^*})(Y({\hat{\tau}(\mathscr{B}_1)}))\bigg]~\forall i\in \mathscr{B}_1^c.\label{eq 4.14}
\end{align}
Now arguing as in the proof of Lemma~\ref{lemm 3.4}, we obtain $\psi_{1}^{\underline{v}_1^*,\underline{v}_2^*}(i) \equiv \psi_{1}^{\underline{v}_2^*}$. Thus, from (\ref{eq 4.7}) and (\ref{eq 4.8}) it follows that $\underline{v}_1^*$ is a minimizing selector of (\ref{eq 3.1}) (for fixed strategy $\underline{v}_2^*\in\mathcal{S}_2$ of player 2). Following  similar arguments one can show that $\underline{v}_2^*$ is a minimizing selector of (\ref{eq 3.4}) (for fixed strategy $\underline{v}_1^*\in\mathcal{S}_1$ of player 1). This completes the proof.
\end{proof}
 \section{ Example}
In this section, we present an illustrative example in wherein  transition rates are unbounded and cost rates are nonnegative and unbounded.
\begin{example}
	Consider a shop which deals with only one type of product for buying and selling. Suppose there are two workers, say, player 1 and player 2 for buying and selling the products, respectively. The number of stocks in the shop is a finite subset of the set of natural numbers $\mathbb{N}$ at each time $t\geq 0$. There are `natural' buying and selling rates, say $\tilde{\mu}$ and $\lambda$, respectively, and buying parameters $h_1$ controlled by player 1 and selling parameters $h_2$ controlled by player 2. When the state of the system is $i\in S:=\{1,2,\cdots\}$ (i.e., number of items in the shop) , player 1 takes an action $u_1$ from a given set $U_1(i)$, which may increase $(h_1(i,u_1)\geq 0)$ or decrease $(h_1(i,u_1)\leq 0)$ the buying rate. These actions produce a payoff denoted by $r_1(i,u_1)$ per unit time. Similarly, if the state is $i\in S$, player 2 takes an action $u_2$ from a set $U_2(i)$ to decrease $(h_2(i,b)\leq 0)$ or to increase $(h_2(i,b)\geq 0)$ the selling rate.  These actions result in a payoff denoted by $r_2(i,u_2)$ per unit time. We assume that when the stock of items in the shop becomes 1, the first player may buy any number of stocks of that item as much as he/she likes depending upon the availability of cash. In addition, we assume that player $k, (k=1,2)$ `gets' a reward $r_k(i):=p_ki$ or incurs a cost $r_k(i):=p_ki$ for each unit of time during which the system remains in the state $i\in S$, where $p_k>0$ is a fixed reward fee, and $p_k<0$, a fixed cost fee, per stock, from the owner.\\
	We next formulate this model as a continuous-time Markov game. The corresponding transition rate $\overline{\pi}_{ij}(u_1,u_2)$ and payoff rate $\bar{c}_k(i,u_1,u_2)$ for player $k (k=1,2)$ are given as follows: for $(1,u_1,u_2)\in K$ ($K$ as in the game model (\ref{eq 2.1})).
	\begin{equation}\label{ExDe1}
	\overline{\pi}_{1j}(u_1,u_2)>0~\forall j\geq 2,~\text{such that}~\sum_{j\in S}\overline{\pi}_{1j}(u_1,u_2)=0,~\text{and}~\overline{\pi}_{1j}(u_1,u_2) \leq e^{-2\theta j}~\forall~j\geq 2,
	\end{equation}
	where $\theta>0$ is a constant.\\
	Also, for $(i,u_1,u_2)\in K$ with $i\geq 2$,
	\begin{align}
	\overline{\pi}_{ij}(u_1,u_2)= \left\{ \begin{array}{lll}\displaystyle{}&\lambda i+h_2(i,u_2),
	~\text{if}~j=i-1\nonumber\\
	&-\tilde{\mu} i-\lambda i-h_1(i,u_1)-h_2(i,u_2),~\text{if}~ j=i\\
	& \tilde{\mu} i+h_1(i,u_1),~\text{if}~j=i+1\nonumber\\
	&0,~\text{otherwise}\displaystyle{}.
	\end{array}\right.
	\end{align}
	\begin{align}
	\bar{c}_1(i,u_1,u_2):=ip_1-r_1(i,u_1),~\bar{c}_2(i,u_1,u_2)=ip_2-r_2(i,u_2)~\text{ for }~(i,u_1,u_2)\in K.\label{eq 5.1}
	\end{align}
	We now investigate conditions under which there exists a Nash-equilibrium. To this end we make following assumptions:
	\begin{enumerate}
		\item [(I)] For each $i\in S$, $U_1(i)=U_2(i)=[0,L]$, $L>0$ is a constant.
		
		\item [(II)] Let $\lambda\geq \tilde{\mu}>0$, $\tilde{\mu}i+h_1(i,u_1)>0$, and $\lambda i+h_2(i,u_2)>0$ for all $(i,u_1,u_2)\in K$ with $i\geq 2$; and  assume that $h_1(1,u_1)>0$ and $h_2(1,u_2)=0$ for all $(u_1,u_2)\in U_1(i)\times U_2(i).$
		
		\item [(III)] The functions $h_1(i,u_1)$, $h_2(i,u_2)$, $r_1(i,u_1)$, $r_2(i,u_2)$, and $\overline{\pi}_{11}(u_1,u_2)$ are continuous in $(u_1,u_2)$ for each fixed $i\in S$. Suppose there exists a finite set $\mathscr{K}$ such that $h_k(i,u_k)=\frac{u_k}{e^{\theta i}}I_{\mathscr{K}}(i)$ and $1\in \mathscr{K}$.
		Also assume that $\inf_{(u_1,u_2)\in U_1(\cdot)\times U_2(\cdot)}r_k(\cdot,u_k)$ is norm like function for $k=1,2$.

		\item [(IV)] 	Suppose $ip_k-r_k(i,u_k)\geq 0~\forall i\in S, (u_1,u_2)\in U_1(i)\times U_2(i)$ and $(1-e^{-\theta})\lambda+(1-e^\theta)\tilde{\mu}>p_k$ for $k=1,2$.
		
		
	\end{enumerate}
\end{example}

\begin{proposition}\label{Prop 6.1}
	Under conditions (I)-(IV), the above controlled system satisfies the Assumptions   \ref{assm 3.1}, \ref{assm 3.2}, and \ref{assm 3.3}. Hence by Theorem \ref{theo 4.1}, there exists a Nash-equilibrium.
\end{proposition}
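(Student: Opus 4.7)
The plan is to verify Assumptions \ref{assm 3.1}, \ref{assm 3.2}, and \ref{assm 3.3} with the Lyapunov function $W(i)=e^{\theta i}$ (and also $\tilde W=W$), after which Theorem \ref{theo 4.1} immediately yields the desired Nash equilibrium. The computations rely on two structural facts visible in the model: the transition rates are affine in $(h_1,h_2)$ and linear in $i$ for $i\ge 2$, while for $i=1$ the bound $\overline{\pi}_{1j}(u_1,u_2)\le e^{-2\theta j}$ in (\ref{ExDe1}) forces geometric summability after multiplication by $W(j)=e^{\theta j}$.

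For $i\ge 2$ the direct three-term computation gives
\[
\sum_{j\in S}W(j)\overline{\pi}_{ij}(u_1,u_2)
= e^{\theta i}\Big[\,i\bigl(\lambda(e^{-\theta}-1)+\tilde\mu(e^{\theta}-1)\bigr)+h_1(i,u_1)(e^{\theta}-1)+h_2(i,u_2)(e^{-\theta}-1)\,\Big],
\]
and for $i=1$ the series $\sum_{j\ge 2}W(j)\overline\pi_{1j}$ is absolutely convergent, uniformly in $(u_1,u_2)$, via the bound $W(j)\overline\pi_{1j}\le e^{-\theta j}$. Defining $\delta:=\lambda(1-e^{-\theta})-\tilde\mu(e^{\theta}-1)$, which is strictly positive and strictly greater than $\max(p_1,p_2)$ by (IV), the coefficient of $i$ above equals $-\delta$, and since the $h$-terms vanish off the finite set $\mathscr K$ (by (III)), I obtain
\[
\sup_{(u_1,u_2)\in U_1(i)\times U_2(i)}\sum_{j\in S}W(j)\overline{\pi}_{ij}(u_1,u_2)\;\le\;-\delta\,i\,W(i)+C_4\,I_{\mathscr K}(i)\qquad\forall\,i\in S.
\]
Taking $\ell(i):=\delta i$, this is Assumption \ref{assm 3.2}(b), and the required norm-likeness of $\ell(i)-\max_{u_k} c_k(i,\cdot)=(\delta-p_k)i+\inf_{u_k}r_k(i,u_k)$ follows from $\delta>p_k$ together with (III). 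The same computation, absorbed into arbitrary constants, gives Assumption \ref{assm 3.1}(i); part (ii) uses $-\overline\pi_{ii}\le(\lambda+\tilde\mu)i+2L\le C_3 e^{\theta i}$ for $i\ge 2$ and the summable bound from (\ref{ExDe1}) for $i=1$.

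Irreducibility under any $(v_1,v_2)\in\mathcal{S}_1\times\mathcal{S}_2$ follows because $\overline\pi_{1j}>0$ for every $j\ge 2$ by (\ref{ExDe1}) and, by (II), $\lambda i+h_2(i,u_2)>0$ for $i\ge 2$, so the path $i\to i-1\to\cdots\to 1$ has strictly positive rates. Assumption \ref{assm 3.3}(i) is immediate from (III) plus the affine dependence of $\overline\pi_{ij}$ on $(h_1,h_2)$ for $i\ge 2$; for $i=1$ continuity of $\overline\pi_{11}$ is assumed, and continuity of the remaining $\overline\pi_{1j}$ in $(u_1,u_2)$ is handled by the conservativity constraint together with dominated convergence. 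Assumption \ref{assm 3.3}(ii) follows for $i\ge 2$ from the three-term sum and for $i=1$ from a Weierstrass-type argument using $W(j)\overline\pi_{1j}\le e^{-\theta j}$. Assumption \ref{assm 3.3}(iii) holds with $i_0=1$ by (\ref{ExDe1}). The only delicate step is the Lyapunov drift computation under Assumption \ref{assm 3.2}(b), whose sign is controlled precisely by inequality (IV); once this is in place, invoking Theorem \ref{theo 4.1} concludes the proof.
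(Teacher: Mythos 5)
Your verification is essentially the paper's own proof: the same Lyapunov function $W(i)=e^{\theta i}$, the same three-term drift computation for $i\ge 2$ (your $\delta$ is the paper's $\alpha$), the same use of condition (IV) to make $\ell(i)-\sup_{u}c_k$ norm-like, and the same geometric bound from (\ref{ExDe1}) to handle state $1$ and Assumption \ref{assm 3.3}(iii). The only differences are cosmetic — you spell out irreducibility and the state-$1$ continuity/summability details that the paper leaves implicit — so the argument stands as a correct rendering of the paper's proof.
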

\begin{proof}
	Take a Lyapunov function as $V(i):=e^{\theta i}$ for $i\in S$ for some $\theta>0$ as described earlier. Then, we have $V(i)\geq 1$ for all $i\in S$. Now for each $i\geq 2$, and $(u_1,u_2)\in U_1(i)\times U_2(i)$, we have
	\begin{align}
	\sum_{j\in S}& \overline{\pi}_{ij}(u_1,u_2)V(j)=\overline{\pi}_{i(i-1)}(u_1,u_2)V(i-1)+V(i)\overline{\pi}_{ii}(u_1,u_2)+V(i+1)\overline{\pi}_{i(i+1)}(u_1,u_2)\nonumber\\
	&=e^{\theta i}\bigg[(\lambda i+h_2(i,u_2))e^{-\theta}-( i\tilde{\mu}+\lambda i +h_1(i,u_1)+h_2(i,u_2))+(\tilde{\mu} i+h_1(i,u_1))e^{\theta}\bigg]\nonumber\\
	&=e^{\theta i} i\bigg[\tilde{\mu} (e^\theta-1)+\lambda(e^{-\theta}-1)+\frac{e^{\theta}h_1(i,u_1)+e^{-\theta}h_2(i,u_2)-h_1(i,u_1)-h_2(i,u_2)}{i}\bigg]\nonumber\\
	&=iV(i)[\tilde{\mu} (e^\theta-1)+\lambda(e^{-\theta}-1)]+\biggl[u_1(e^\theta-1)+u_2(e^{-\theta}-1)\biggr]I_{\mathscr{K}}(i)\nonumber\\
	&	\leq iV(i)[\tilde{\mu} (e^\theta-1)+\lambda(e^{-\theta}-1)]+L(e^\theta-1)I_{\mathscr{K}}(i).\label{eq 5.2}
	\end{align}
	Now for every $\theta>0$, we know
	\begin{align*}
	\lambda(e^{-\theta}-1)+\tilde{\mu}(e^\theta-1)<0\Leftrightarrow \tilde{\mu}<\lambda e^{-\theta}.
	\end{align*}
	Let $[\tilde{\mu}(e^\theta-1)+\lambda(e^{-\theta}-1)]=-\alpha$ for some $\alpha>0$. Also, let $\ell(i)=i\alpha$ and $C_4=\max \biggl\{L(e^\theta -1),\frac{e^{-2\theta}}{1-e^{-\theta}}\biggr\}$ (see (\ref{eq 5.4})).
	Then for $i\geq 2$,
	\begin{align}
	\sup_{(u_1,u_2)\in U_1(i)\times U_2(i)}\sum_{j\in S}V(j)\overline{\pi}_{ij}(u_1,u_2)\leq C_4 I_{\mathscr{K}}(i)-\ell(i)V(i) ~\forall i\in S.\label{eq 5.3}
	\end{align}
	Also, we have
	\begin{align}
	\sum_{j\in S}\overline{\pi}_{1j}(u_1,u_2)V(j) < \bar{\pi}_{11}(u_1,u_2)e^{\theta} + \sum_{j\geq 2}e^{-2\theta j}e^{\theta j}\leq \bar{\pi}_{11}(u_1,u_2)e^{\theta} + \frac{e^{-2\theta}}{1 - e^{-\theta}} <\infty.\label{eq 5.4}
	\end{align}	
	Since $-\ell(i)<1$ for all $i\in S$.
	Hence from (\ref{eq 5.3}) and (\ref{eq 5.4}), for $i\geq 1$, we have
	\begin{align}
	\sum_{j\in S}\overline{\pi}_{ij}(u_1,u_2)V(j)\leq C_1 V(i)+C_2, ~\text{ where}~C_1=1~\text{ and}~  C_2=C_4.\label{eq 5.6}
	\end{align}

	For $i\geq 2$,
	\begin{align}
	-\overline{\pi}_{ii}(u_1,u_2)&=\tilde{\mu} i+\lambda i+h_1(i,u_1)+h_2(i,u_2)\nonumber\\
	&\leq i(\tilde{\mu}+\lambda)+2L\nonumber\\
	&\leq \frac{1}{\theta}(\tilde{\mu}+\lambda) V(i)+2L V(i)\nonumber\\
	&=(2L+\tilde{\mu}+\lambda)\frac{1}{\theta} V(i)\nonumber\\
	&=C_3 V(i).\label{eq 5.7}
	\end{align}
	Take $W=\tilde{W}=V$.
	Now for $k=1,2$
	\begin{align}
	\ell(i)-\sup_{(u_1,u_2)\in U_1(i)\times U_2(i)}\bar{c}_k(i,u_1,u_2)&=\alpha i-ip_k+\inf_{u_k\in U_k(i)}r_k(i,u_k)\nonumber\\
	&=i\beta_k + \inf_{u_k\in U_k(i)}r_k(i,u_k).\label{eq 5.8}
	\end{align}
	We see that from condition (IV), that $\beta_k=\alpha-p_k\geq 0$.
	So, $\ell(i)-\sup_{(u_1,u_2)\in U_1(i)\times U_2(i)}\bar{c}_k(i,u_1,u_2)$ is norm-like function for $k=1,2$.
	Now by (\ref{eq 5.6}), we say Assumption \ref{assm 3.1} (i) holds.
	Also by (\ref{ExDe1}) and (\ref{eq 5.7}), Assumption \ref{assm 3.1} (ii) is verified.\\
		Now we verify Assumption \ref{assm 3.2}.
	By (\ref{eq 5.3}), (\ref{eq 5.4}) and (\ref{eq 5.8}), it is easy to see that Assumption \ref{assm 3.2} is satisfied. \\
	Now by condition (III) and (\ref{eq 5.1}), we say $c_k(i,u_1,u_2)$ and $\overline{\pi}_{ij}(u_1,u_2)$ are continuous in $(u_1,u_2)\in U_1(i)\times U_2(i)$ for each fixed $i,j\in S$ and for $k=1,2$. So, Assumption \ref{assm 3.3} (i) is verified.
	By (\ref{eq 5.2}) and (\ref{eq 5.4}) and condition (III), we say that Assumption \ref{assm 3.3} (ii) is verified. Also, from (\ref{ExDe1}) it is easy to see that Assumption \ref{assm 3.3} (iii) is satisfied.\\
	Hence by Theorem \ref{theo 4.1} there exists a Nash-equilibrium for this controlled process.
\end{proof}
\section{Acknowledgment}
The research work of Mrinal K. Ghosh is partially supported by UGC Centre for Advanced Study. The research work of Somnath Pradhan is partially supported by a National Postdoctoral Fellowship PDF/2020/001938.

\bibliographystyle{elsarticle-num}

\begin{thebibliography}{00}
	\bibitem{BG1}  A. BASU AND M. K. GHOSH, \textit{Nonzero-sum risk-sensitive stochastic games on a countable state space}, Math. Oper. Res., 43(2) (2018), pp. 516-532.
	
\bibitem{BG2} A. BASU AND M. K. GHOSH, \textit{Zero-sum risk-sensitive stochastic games on a countable state space}, Stochastic Process. Appl., 124(1) (2014), pp. 961-983.
	
	\bibitem{B2} R. BELLMAN, \textit{ Dynamic Programming}, Princeton University Press, Princeton, N. J., (1957).
	
	
	\bibitem{BP} A. BISWAS AND S. PRADHAN, \textit{Ergodic risk-sensitive control of Markov processes on countable state space revisited}, ArXiv e-prints 2104.04825 (2021), available at https://arxiv.org/abs/2104.04825.
	
	
\bibitem{CF} R. CAVAZOS-CADENA AND E. FERNANDEZ-GAUCHERAND, \textit{Controlled Markov chains with risk-sensitive criteria: average cost, optimality equations, and optimal solutions}, Math. Methods Oper. Res., 49 (1999), pp. 299-324.

\bibitem{MS1} G.\ B. Di MASI AND L. STETTNER, \textit{Risk-sensitive control of discrete-time Markov processes with infinite horizon},  SIAM J.
Control Optim., 38(1)(1999), pp. 61-78.

\bibitem{MS3} G.\ B. Di MASI AND L. STETTNER, \textit{ Infinite horizon risk-sensitive control of discrete time Markov processes under minorization property}, SIAM J. Control Optim., 46(1) (2007), pp. 231-252.
	\bibitem{F1} K. FAN, \textit{Fixed-point and minimax theorems in locally convex topological linear spaces}, Proc. Nat. Acad. Sc., 38 (1952), pp. 121-126.
	
\bibitem{FH1} W. H. FLEMING, AND D. HERNANDEZ-HERNANDEZ, \textit{ Risk-sensitive control of finite state machines on an infinite horizon}, SIAM J. Control Optim. 35(5) (1997), pp. 1790-1810.


	
	\bibitem{GKP} M. K. GHOSH, K. S. KUMAR, AND C. PAL, \textit{Zero-sum risk-sensitive stochastic games for continuous-time Markov chains}, Stoch. Anal. Appl., 34 (2016), pp. 835-851.
				\bibitem{GS} M. K. GHOSH AND S. SAHA, \textit{Risk-sensitive control of continuous-time Markov chains}, Stochastics, 86 (2014), pp. 655-675.
\bibitem{GP1} S. GOLUI AND  C. PAL, \textit{Continuous-time zero-sum games for Markov chains with risk-sensitive finite-horizon cost criterion,}  Stoch. Anal. Appl., (2021), available at https://doi.org/10.1080/07362994.2021.1889381.
	
	\bibitem{GH1} X. P. GUO AND O. HERNANDEZ-LERMA, \textit{Zero-sum games for continuous-time Markov chains with unbounded transition and average payoff rates}, J. Appl. Probab., 40(2) (2003), pp. 327-345.

	
	\bibitem{GH2} X. P. GUO AND O. HERNANDEZ-LERMA, \textit{Nonzero-sum games for continuous-time Markov chains with unbounded discounted payoffs}, J. Appl. Probab., 42(2) (2005) pp. 303-320.
	
\bibitem{GH3}  X. P. GUO AND O. HERNANDEZ-LERMA, \textit{Zero-sum games for continuous-time jump Markov processes in Polish spaces: discounted payoffs}, Adv. in Appl. Probab. 39(3) (2007) pp. 645-668.

	\bibitem{GH4} X. P. GUO AND O. HERNANDEZ-LERMA, \textit{Continuous-Time Markov decision processes: Theory and Applications}, Stoch. modelling and Appl. Probab., Springer, Berlin, 62 (2009).
	
%
	\bibitem{GL}	X. P. GUO AND Z. W. LIAO, \textit{Risk-sensitive discounted continuous-time Markov decision processes with unbounded rates}, SIAM J. Control Optim., 57 (2019), pp. 3857-3883.
	\bibitem{GLZ} X. P. GUO, Q. LIU, AND Y. ZHANG, \textit{Finite horizon risk-sensitive continuous-time Markov decision processes with unbounded transition and cost rates}, 4OR, 17 (2019), pp. 427-442.
	\bibitem{GP} X. P. GUO AND A. PIUNOVSKIY, \textit{Discounted continuous-time Markov decision processes with constraints: Unbounded transition and loss rates}, Math. Oper. Res., 36 (2011), pp. 105-132.
	
	\bibitem{GS1}	X. P. GUO AND X. SONG, \textit{Discounted continuous-time constrained Markov decision processes in polish spaces}, Ann. Appl. Probab., 21 (2011), pp. 2016-2049.
	
	
	\bibitem{HL}  O. HERNANDEZ-LERMA, J. LASSERRE, \textit{ Further topics on discrete-time Markov control processes}, Springer, New York, (1999).
	
	

	
	\bibitem{K1} M. Y. KITAEV, \textit{Semi-Markov and jump Markov controlled models: Average cost criterion}, SIAM Theory Probab. Appl., 30 (1995), pp. 272-288.
	
	\bibitem{KR} M. Y. KITAEV AND V.V. RYKOV, \textit{Controlled Queueing Systems}, CRC Press, Boca Raton, (1995).
	
	\bibitem{KP1} K.S. KUMAR AND C. PAL, \textit{Risk-sensitive control of jump process on denumerable state space with near monotone cost}, Appl. Math. Optim., 68 (2013), pp. 311-331.
	
	\bibitem{KP2} K.S. KUMAR AND C. PAL, \textit{Risk-sensitive control of continuous-time Markov processes with denumerable state space}, Stoch. Anal. Appl., 33 (2015), pp. 863-881.
	
	\bibitem{PP} C. PAL AND S. PRADHAN, \textit{Risk sensitive control of pure jump processes on a general state space},  Stochastics, 91(2) (2019), pp. 155-174.
	
		\bibitem{PZ} A. PIUNOVSKIY AND Y. ZHANG, \textit{Discounted continuous-time Markov decision processes with unbounded rates: The convex analytic approach}, SIAM J. Control Optim., 49 (2011), pp. 2032-2061.
		
	
	
	\bibitem{WC2} Q. WEI, \textit{Nonzero-sum risk-sensitive finite-horizon continuous-time stochastic games}, Statistics \& Probability Letters, 147 (2019), pp. 96-104.

	
	\bibitem{W3} P. WHITTLE, \textit{Risk-Sensitive Optimal Control}, Wiley-Inter science Series in Systems and Optimization, John Wiley \& Sons Ltd., Chichester, (1990).
	
	

	\bibitem{Z1} Y. ZHANG, \textit{Continuous-time Markov decision processes with exponential utility}, SIAM J. Control Optim., 55 (2017), pp. 2636-2660.



	



\end{thebibliography}

\end{document}